\numberwithin{equation}{section}
\newtheorem{theorem}{Theorem}[section]
\newtheorem{lemma}[theorem]{Lemma}
\newtheorem{proposition}[theorem]{Proposition}
\newtheorem{corollary}[theorem]{Corollary}
\theoremstyle{definition}
\newtheorem{definition}[theorem]{Definition}
\newtheorem{remark}[theorem]{Remark}
\newtheorem{example}[theorem]{Example}
\newcommand{\NN}{\mathbb{N}}
\newcommand{\ZZ}{\mathbb{Z}}
\newcommand{\RR}{\mathbb{R}}
\newcommand{\LL}{\mathcal{L}}
\newcommand{\PP}{\mathbb{P}}
\newcommand{\TT}{\mathbb{T}}
\newcommand{\xx}{\underline{x}}
\renewcommand{\aa}{\mathbf{a}}
\newcommand{\bb}{\mathbf{b}}
\newcommand{\cc}{\mathbf{c}}
\newcommand{\uu}{\mathbf{u}}
\newcommand{\vv}{\mathbf{v}}
\newcommand{\ww}{\mathbf{w}}
\newcommand{\ev}{\operatorname{ev}}
\newcommand{\set}[1]{\left\{ #1 \right \} }
\newcommand{\ls}[1]{\left | #1 \right | }
\newcommand{\Span}[1]{\left < #1 \right > }
\newcommand{\rt}{\rightarrow}
\def\noqed{\renewcommand{\qedsymbol}{}}
\begin{document}


\title[Codes over a weighted torus]{Codes over a weighted torus}

\author{Eduardo Dias}
\address{Mathematics Institute, 
Zeeman Building, 
University of Warwick,
Coventry CV4 7AL, U.K.
}
\email{eduardo.m.dias@gmail.com}

\author{Jorge Neves}
\address{CMUC, Department of Mathematics, University of Coimbra
3001-454 Coimbra, Portugal.
}
\email{neves@mat.uc.pt}

\thanks{The second author was partially funded by CMUC, through
European program COMPETE/FEDER and FCT project PEst-C/MAT/UI0324/2011. Part of work was developed during a research visit of the second author 
to CINVESTAV of the IPN, M\'exico, under the financial support of a research grant from Santander Totta Bank (Portugal).}

\subjclass[2010]{Primary 13P25; Secondary 14G50, 14G15, 11T71, 94B27,
94B05.} 

\begin{abstract} We define weighted projective Reed--Muller codes over a subset $X \subset \PP(w_1,\dots,w_s)$ in weighted projective space over a finite field. 
We focus on the case when the set $X$ is a projective weighted torus. We show that the vanishing ideal of $X$ is a lattice ideal and relate it with 
the lattice ideal of a minimal presentation of the semigroup algebra of the numerical semigroup $Q=\Span{w_1,\dots,w_s}\subset \NN$.  We compute the index of regularity of 
the vanishing ideal as function of the weights of the projective space and the Frobenius number of $Q$. We compute the 
basic parameters of weighted projective Reed--Muller codes over a $1$-dimensional weighted torus and prove they are maximum distance separable codes.
\end{abstract}

\maketitle


\section{Introduction}

A standard projective Reed--Muller code, $C_X(d)$, is the image of the degree $d$ homogeneous component of a standard polynomial ring $K[t_1,\dots,t_s]$ over a finite
field $K$ by a homomorphism defined by evaluation of forms of degree $d$ on the points of and arbitrary subset $X\subset \PP^s$. 
In this work we define the notion of \emph{weighted projective Reed--Muller code} (see Definition~\ref{def: weighted projective Reed--Muller codes}). This notion
differs from the standard definition in that the grading of $K[t_1,\dots,t_s]$, which is given by $\deg(t_i)=w_i\geq 1$, for coprime integers $w_i$, 
is not necessarily the standard one. We focus on the family of codes $C_\TT(d)$ associated to 
a weighted $(s-1)$-dimensional projective torus $\TT(w_1,\dots,w_s)$ (see Definition~\ref{def: weighted projective torus}).

Standard projective Reed--Muller codes of order $d\leq q$ were defined and studied by Lachaud in \cite {La88, La90} and, for all $d\geq 0$,  by S{\o}rensen in \cite{So}. 
Much of the recent research on projective Reed--Muller codes over an arbitrary subset of $X\subset \PP^s$ focuses on the computation of their basic
para\-meters: \emph{length}, \emph{dimension} and \emph{minimum distance} (see Definition~\ref{def: basic parameters}). 
When $X=\PP^s$ all the basic parameters are known (\emph{cf.}~\cite{La90,So});
in particular, projective Reed--Muller codes over $\PP^1$ are \emph{maximum distance separable} codes. In general, 
an approach to this computation using commutative algebra (as in \cite{ReTa97}) 
relies on a good understanding of $I_X\subset K[t_1,\dots,t_s]$, the vanishing ideal of $X$. 
Many authors have studied projective Reed--Muller codes over a subset $X\subset \PP^s$ for which the ideal $I_X$ is well understood; \emph{e.g.}, when
$X$ is the set of rational points of a complete intersection, \emph{cf.}~\cite{BaFo06,Co12,CoDu13,DuReTa,GoLiSc,Han,SaVPVi12,Sop}, when $X$ is the Segre embedding of the product of two projective spaces,
\emph{cf.}~\cite{GoReTa02}, when $X$ is the Veronese variety \emph{cf.}~\cite{ReTa00}, when $X$ is an affine cartesian product \emph{cf.}~\cite{Ca13,LoReVi12}, or when $X$ is the projective torus in $\PP^s$, or an \emph{algebraic toric subset}, 
\emph{cf.}~\cite{NeVPVi, NeVPVi11, algcodes, SaVPVi11, SaVPVi12}. 

The advantage of working with subsets of the torus is that for a certain subclass of these subsets 
(consisting of algebraic toric subsets, as defined by Villarreal \emph{et al.} in \cite{ReyViZa00,algcodes}) the ideal $I_X$ is a lattice ideal.
Like in the standard case, $I_\TT$, the vanishing ideal of the weighted torus $\TT(w_1,\dots,w_s)$, is also a lattice ideal. 
Indeed, we show that $I_\TT$ is Cohen--Macaulay, $1$-dimensional and can be obtained from the lattice ideal of a minimal presentation of the semigroup algebra of the numerical semigroup $Q=\Span{w_1,\dots,w_s}\subset \NN$
(\emph{cf.}~Theorem~\ref{thm: relation between I_T and the ideal of the minimal presentation}). The lattice ideal of a 
minimal presentation of the semigroup algebra was first studied by Herzog in \cite{herzog}. He gives a sufficient condition for this ideal to
be a complete intersection (see Remark~\ref{rmk: when do we get a complete intersection}), which, combined with our results, 
is also a sufficient condition for $I_\TT$ to be a complete intersection. The relation between $I_\TT$ and the lattice ideal of a minimal presentation
of $Q$ enables the computation of the Hilbert Series and the index of regularity of $K[t_1,\dots,t_s]/I_\TT$ in terms of $w_1,\dots,w_s$ and 
the Frobenius number of $Q$ (\emph{cf.}~Theorem~\ref{thm: Hilbert Series} and Corollary~\ref{cor: regularity}). The importance, from a coding theory point of view,  
of the knowledge of the index of regularity is clearer in the 
case of standard projective Reed--Muller codes. Here, the function $\dim_K C_X(d)$ 
is strictly increasing and the value of $d$ for which $\dim_K C_X(d)$ becomes constant 
and equal to the dimension of the ambient space (thus, for which $C_X(d)$ becomes the trivial code) is precisely given by the index of regularity. In the weighted case $\dim_K C_X(d)$ is not necessarily
an increasing function and we may get some trivial codes $C_X(d)$ before $d$ reaches the index of regularity (\emph{cf.}~Example~\ref{example}). However for $d$ greater than or equal to 
index of regularity $C_X(d)$ is always a trivial code. 

The structure of the article is as follows. In Section~\ref{sec: vanishing} we study the vanishing ideal of a weighted projective torus. The basic definitions are
recalled. We show that $I_\TT$ is a $1$-dimensional, Cohen-Macaulay lattice ideal and relate it to the lattice ideal of a minimal presentation of the semigroup algebra of 
\mbox{$Q=\Span{w_1,\dots,w_s}$}. In Section~\ref{sec: weighted PRM codes} we define weighted projective Reed-Muller codes. We compute the length of the weighted projective 
Reed--Muller codes over the weighted torus  (\emph{cf.} Proposition~\ref{prop: length of the weighted torus}) and we compute the Hilbert Series and the index of regularity of $K[t_1,\dots,t_s]/I_\TT$
(\emph{cf}.~Theorem~\ref{thm: Hilbert Series} and Corollary~\ref{cor: regularity}). In Section~\ref{sec: codes in 1-dim} we study projective Reed--Muller codes over a $1$-dimensional 
weighted torus $\TT(w_1,w_2)$. We compute their dimensions and minimum distances (\emph{cf.} Proposition~\ref{prop: dimension of the code over the torus} 
and Theorem~\ref{thm: computing the minimum distance}) and we show they are \emph{maximum distance separable} codes.

\section{Vanishing ideal of a weighted torus}\label{sec: vanishing}

Let $K$ be a finite field with $q$ elements. We denote by $K^*$ the cyclic group of invertible elements of $K$. 
Given two $s$-tuples $\vv=(n_1,\dots,n_s)\in\NN^s$ and $u=(u_1,\dots,u_s)\in R^s$, where $R$ is a commutative ring with identity, 
$u^\vv$ denotes $u_1^{n_1}\cdots u_s^{n_s}\in R$. We will use this notation for vectors of variables $t=(t_1,\dots,t_s)$ or 
$y=(y_1,\dots,y_s)$. We reserve boldface for vectors of integers. 

\begin{definition}
An ideal $I\subset K[t_1,\dots,t_s]$ generated by \emph{binomials}, \emph{i.e.}~polynomials
of the form $\alpha t^\aa - \beta t^\bb$, for some $\alpha, \beta \in K$ and $\aa,\bb \in \NN^s$, is called a \emph{binomial ideal}.
\end{definition}

We will only deal with \emph{pure binomials}, \emph{i.e.}, binomials for which $\alpha= \beta =1$.

\begin{definition}
Let $\LL \subset \ZZ^s$ be a lattice. The lattice ideal $I_\LL\subset K[t_1,\dots,t_s]$ is the ideal generated by $t^\aa-t^\bb$ for all $\aa,\bb\in \NN^s$ such that $\aa-\bb \in \LL$.
\end{definition}

\begin{definition}
Let $w_1,\dots,w_s$ be a set of positive integers satisfying \mbox{$\gcd(w_1,\dots,w_s)=1$}. 
The \emph{weighted projective space}, that we denote by $\PP(w_1,\dots,w_s)$, is the quotient \mbox{$(K^{s}\setminus 0)/K^*$}, where $\lambda\in K^*$ acts by 
$\lambda (x_1,\dots,x_s) = (\lambda^{w_1}x_1,\dots, \lambda^{w_s}x_s)$.
\end{definition}

Let $K[t_1,\dots,t_s]$ be the coordinate ring of $\PP(w_1,\dots,w_s)$, endowed with the grading given by $\deg(t_i)=w_i$, for all $1\leq i\leq s$. 
Set $\ww = (w_1,\dots,w_s)$. We remark that a binomial $t^\aa-t^\bb$, with $\aa,\bb\in \NN^s$ is 
homogeneous if and only if $\Span{\aa-\bb,\ww}=0$.

\begin{definition}\label{def: weighted projective torus}
The \emph{weighted projective torus}, $\TT(w_1,\dots,w_s)\subset \PP(w_1,\dots,w_s)$ is the set:
\[
\TT(w_1,\dots,w_s):=\set{(x_1,\dots,x_s)\in \PP(w_1,\dots,w_x) : x_1\cdots x_s\not = 0}. 
\]
\end{definition}

\begin{definition} For a set $X\subset \PP(w_1,\dots,w_s)$ the ideal of $K[t_1,\dots,t_s]$ generated by all homogeneous polynomials that vanish on
$X$ is called the \emph{vanishing ideal} of $X$ and is denoted by $I_X$. We denote the vanishing ideal of $\TT(w_1,\dots,w_s)$ by $I_\TT$. 
\end{definition}

Over an infinite field $I_\TT = (0)$. However given that $K$ has $q$ elements and its multiplicative group is cyclic of order $q-1$,
we get $$\bigl(t_2^{w_1(q-1)}-t_1^{w_2(q-1)},\dots,t_s^{w_1(q-1)}-t_1^{w_s(q-1)}\bigr)\subset I_\TT.$$
For general $w_1,\dots,w_s$ this inclusion is strict. The precise structure of a minimal generating set of $I_\TT$ is closely related with 
the numerical semigroup $Q=\Span{w_1,\dots,w_s}\subset \NN$, \emph{cf.}~Remark~\ref{rmk: when do we get a complete intersection}.

The proof of the next lemma follows closely that of Theorem~2.1 in \cite{algcodes}.

\begin{lemma}\label{lemma: I is generated by pure binomials}
In the polynomial ring extension $K[t_1,\dots,t_s]\subset K[t_1,\dots,t_s,y_1,\dots,y_s,z]$,
\begin{equation}\label{eq: basic equality}
I_\TT = (\set{t_i-y_iz^{w_i}}_{i=1}^s\cup \{y_i^{q-1}-1\}_{i=1}^s)\cap K[t_1,\dots,t_s].
\end{equation}
In particular, $I_\TT$ is generated by homogeneous pure binomials. 
\end{lemma}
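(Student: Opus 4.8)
The plan is to mimic the standard proof for algebraic toric subsets (Theorem~2.1 in \cite{algcodes}), introducing $2s+1$ auxiliary variables $y_1,\dots,y_s,z$ in order to parametrize $\TT(w_1,\dots,w_s)$. Write $J=(\set{t_i-y_iz^{w_i}}_{i=1}^s\cup\{y_i^{q-1}-1\}_{i=1}^s)$ for the ideal on the right-hand side of \eqref{eq: basic equality}, and set $J'=J\cap K[t_1,\dots,t_s]$. The geometric idea is that a point of $\TT(w_1,\dots,w_s)$, written with $x_i=a_i\in K^*$, is the image of the parametrized point obtained by letting $y_i$ run over $(K^*)^s$ and $z$ over $K^*$ via $t_i=y_iz^{w_i}$: varying $z\in K^*$ accounts exactly for the $K^*$-action $\lambda\cdot(x_1,\dots,x_s)=(\lambda^{w_1}x_1,\dots,\lambda^{w_s}x_s)$ defining weighted projective space, and the relations $y_i^{q-1}=1$ force the $y_i\in K^*$.

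First I would prove the inclusion $J'\subseteq I_\TT$. Take $f\in J'$ homogeneous of degree $d$ (it suffices to treat homogeneous $f$, since both ideals are homogeneous — indeed $J$ is generated by elements that are homogeneous once we assign $\deg t_i=\deg y_i=w_i$, $\deg z=1$). Given a point $[a_1:\dots:a_s]\in\TT$ with $a_i\in K^*$, since $K^*$ is cyclic of order $q-1$ we may write $a_i=\gamma_i^{?}$; more directly, pick $b_i\in K^*$ with $b_i^{q-1}=1$ (automatic) and observe that $(t_1,\dots,t_s,y_1,\dots,y_s,z)\mapsto(a_1,\dots,a_s,a_1,\dots,a_s,1)$ is a common zero of all the generators of $J$ (using $z=1$, $y_i=t_i=a_i$, and $a_i^{q-1}=1$). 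Hence $f$ vanishes there, i.e.\ $f(a_1,\dots,a_s)=0$; so $f\in I_\TT$. Conversely, for $I_\TT\subseteq J'$, the key algebraic step is to compute $J$ by elimination: using the binomials $t_i-y_iz^{w_i}$ to eliminate the $t_i$, one has $K[t,y,z]/J\cong K[y,z]/(y_i^{q-1}-1)$ with $t_i\equiv y_iz^{w_i}$; then $J'$ is the kernel of the composite $K[t_1,\dots,t_s]\to K[y,z]/(y_i^{q-1}-1)$ sending $t_i\mapsto y_iz^{w_i}$. A homogeneous $f$ of degree $d$ lies in this kernel iff $f(y_1z^{w_1},\dots,y_sz^{w_s})=z^d f(y_1,\dots,y_s)$ vanishes modulo $(y_i^{q-1}-1)$, i.e.\ iff $f$ vanishes at every $(b_1,\dots,b_s)\in(K^*)^s$; and every such tuple represents a point of $\TT$. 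Thus $f\in I_\TT\Rightarrow f\in J'$.

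For the final sentence of the lemma, note that $J$ is generated by pure binomials (the $t_i-y_iz^{w_i}$ and $y_i^{q-1}-1=y_i^{q-1}-z^0y_1^0\cdots$, a pure binomial), and it is a standard fact that the contraction of a binomial ideal to a polynomial subring generated by a subset of the variables is again a binomial ideal; more concretely, a reduced Gröbner basis of $J$ with respect to an elimination order for $\{y_i,z\}$ consists of pure binomials (Gröbner basis computations preserve the property of being generated by pure binomials, since $S$-polynomials and reductions of pure binomials are pure binomials or zero), and the elements of this basis lying in $K[t_1,\dots,t_s]$ generate $J'=I_\TT$. Homogeneity of these generators follows from the homogeneity of $I_\TT$ together with the fact that the homogeneous components of a pure binomial $t^\aa-t^\bb$ are again among $\{t^\aa-t^\bb,\,t^\aa,\,t^\bb\}$, and the monomials cannot lie in $I_\TT$. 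The main obstacle is being careful with the bookkeeping in the elimination step: one must check that eliminating $z$ (which has degree $1$, not a unit) still yields $J'=\Ker(t_i\mapsto y_iz^{w_i})$ correctly, and that this kernel is exactly $I_\TT$ rather than a possibly larger or smaller ideal — this is where the surjectivity of $(K^*)^s\to\TT$ and the homogeneity argument $f(y_1z^{w_1},\dots)=z^{\deg f}f(y_1,\dots)$ do the real work.
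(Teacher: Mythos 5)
Your argument is correct in substance and starts from the same auxiliary ideal $J=(\set{t_i-y_iz^{w_i}}_{i=1}^s\cup\{y_i^{q-1}-1\}_{i=1}^s)$ as the paper, but the two inclusions are handled by a genuinely different mechanism. For $I_\TT\subseteq J\cap K[t_1,\dots,t_s]$ the paper rewrites each monomial $t^{\uu}$ explicitly as $\sum_i(t_i-y_iz^{w_i})g_{\uu,i}+z^dy^{\uu}$, divides by the $y_i^{q-1}-1$ and kills the remainder with the Combinatorial Nullstellensatz; you instead identify $J\cap K[t_1,\dots,t_s]$ with the kernel of $K[t_1,\dots,t_s]\to K[y,z]/(y_i^{q-1}-1)$, $t_i\mapsto y_iz^{w_i}$, and use $f(y_1z^{w_1},\dots,y_sz^{w_s})=z^{\deg f}f(y_1,\dots,y_s)$. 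This is a cleaner packaging of the same content, but note that it silently absorbs the one nontrivial analytic ingredient: the step ``$f$ vanishes on $(K^*)^s$ $\implies$ $f\in(y_1^{q-1}-1,\dots,y_s^{q-1}-1)$'' is exactly what the paper's division-plus-Nullstellensatz argument establishes, so you should either cite it as the known description of the vanishing ideal of the affine torus or reproduce that short argument. For the reverse inclusion the paper reduces to pure binomials via Eisenbud--Sturmfels and checks homogeneity by the substitution $y_i\mapsto 1$, $t_i\mapsto z^{w_i}$; your route via homogeneity of $J$ plus the observation that no monomial lies in $I_\TT$ is equally valid and arguably slicker.

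One concrete error to fix: the grading you assign, $\deg t_i=\deg y_i=w_i$ and $\deg z=1$, does \emph{not} make $t_i-y_iz^{w_i}$ homogeneous (its two terms then have degrees $w_i$ and $2w_i$). The correct choice is $\deg y_i=0$, $\deg z=1$, under which both families of generators of $J$ are homogeneous and $y_i^{q-1}-1$ has degree $0$; this is also the grading implicitly used in your identity $f(y_1z^{w_1},\dots,y_sz^{w_s})=z^{\deg f}f(y_1,\dots,y_s)$. Since the homogeneity of $J$ (hence of $J\cap K[t_1,\dots,t_s]$) is what legitimizes your reduction to homogeneous $f$ in the inclusion $J\cap K[t_1,\dots,t_s]\subseteq I_\TT$ --- an inhomogeneous polynomial such as $t_1^{q-1}-1$ vanishes at every point of $(K^*)^s$ yet does not belong to $I_\TT$ --- this is not a cosmetic point, though the repair is immediate.
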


\begin{proof}
Let $J$ denote the ideal on the right hand side of (\ref{eq: basic equality}). We start by showing that $I_\TT\subset J$. Let $f=\sum_{j=1}^r \alpha_j t^{\vv_j}$, for some 
$\alpha_j\in K$ and $\vv_j\in \NN^s$ be a homogeneous polynomial of degree $d$. Let $\uu=(u_1,\dots,u_s)\in\NN^s$, then
\[
t^{\uu} = t_1^{u_1}\cdots t_s^{u_s} = (t_1-y_1z^{w_1} +y_1z^{w_1})^{u_1}\cdots (t_s-y_sz^{w_s} +y_sz^{w_s})^{u_s} = \sum_{i=1}^s (t_i-y_iz^{w_i}) g_{\uu,i} + z^{d} y^\uu,
\]
where $g_{\uu,j}\in K[t_1,\dots,t_s,y_1,\dots,y_s,z]$ and $d=\sum_{i=1}^s u_iw_i$. Notice that since $f$ is homogeneous of degree $d$, this sum with $\uu$ replaced 
by $\vv_j$ equals $d$. Expanding each $t^{\vv_j}$ in this way, we get
\[
f = \sum_{i=1}^s (t_i-y_i z^{w_i})g_i + z^d \sum_{j=1}^r\alpha_j y^{\vv_j},
\]
where $g_i=\sum_{j=1}^r g_{\vv_j,i}$. Dividing the polynomial $\sum_{j=1}^r\alpha_j y^{\vv_j}$ by the set $\{y_i^{q-1}-1\}_{i=1}^s$ in the polynomial ring $K[y_1,\dots,y_s]$,
we deduce that there exist \mbox{$h_i,g\in K[y_1,\dots,y_s]$}, with $g$ of degree $<q-1$ in each variable such that 
\begin{equation}\label{eq: lemma first decomposition}
f = \sum_{i=1}^s (t_i-y_i z^{w_i})g_i + z^d \sum_{i=1}^s (y_i^{q-1}-1)h_i + z^d g. 
\end{equation}
Let us show that $g(x_1,\dots,x_s) =0$ for all $(x_1,\dots,x_s)\in (K^*)^s$, since, by the Combinatorial Nullstellensatz, this implies that $g=0$.
Regarding $(x_1,\dots,x_s)\in (K^*)^s$ as a system of homogeneous coordinates of a point in $\TT(w_1,\dots,w_s)$ we get, by assumption, $f(x_1,\dots,x_s)=0$. Hence,
setting $t_i=y_i=x_i$ and $z=1$ in (\ref{eq: lemma first decomposition}):
\[
0 = \sum_{i=1}^s (x_i^{q-1}-1)h_i(x_1,\dots,x_s) + g(x_1,\dots,x_s) = g(x_1,\dots,x_s).
\]
To show the reverse inclusion we start by remarking that $J$, being an elimination of an ideal generated by pure binomials, is itself generated by pure binomials (\emph{cf.}~\cite{EiSt96}). 
It suffices to show that any pure binomial in $J$ is also in $I_\TT$. In passing we will show that such a binomial is homogeneous. This will imply the
assertion that $I_\TT$ is generated by homogeneous pure binomials. Let $t^\aa - t^\bb\in J$, for some $\aa,\bb \in \NN^s$. Then there exist $g_i,h_i\in K[t_1,\dots,t_s,y_1,\dots,y_s,z]$ such that 
\begin{equation}\label{eq: expressing binomial}
t^\aa - t^\bb = \sum_{i=1}^s (t_i-y_i z^{w_i})g_i + \sum_{i=1}^s (y_i^{q-1}-1)h_i.
\end{equation}
Substituting in (\ref{eq: expressing binomial}) $1$ for $y_i$ and $z^{w_i}$ for $t_i$ we get $z^{a_1w_1}\cdots z^{a_sw_s}-z^{b_1w_1}\cdots z^{b_sw_s}=0$ and therefore
$\Span{\aa,\ww}=\Span{\bb,\ww}$, \emph{i.e.}, $t^\aa-t^\bb$ is homogeneous. Finally, to show that it vanishes on an arbitrary point $(x_1,\dots,x_s)$ of the weighted torus, 
we use (\ref{eq: expressing binomial}) but this time substituting $t_i$ and $y_i$ by $x_i$ and $z$ by $1$.
\end{proof}

\begin{remark}
More generally, it can be shown that Lemma~\ref{lemma: I is generated by pure binomials} holds for the 
vanishing ideal of a \emph{weighted toric subset} $X\subset \TT(w_1,\dots,w_s)$ parameterized by $\vv_1,\dots,\vv_s \in \NN^n$, for some integer $n>0$. More precisely,
by analogy with the standard case (\emph{cf.}~\cite[\S 2]{algcodes}), when $X$ the subset defined by:
\[
X = \set{(\xx^{\vv_1},\dots,\xx^{\vv_s})\in \PP(w_1,\dots,w_s) : \xx \in (K^*)^n},
\]
where $w_i = \frac{1}{d}\sum_{j=1}^n (\vv_i)_j$, with
$d = \gcd \{\sum_{j=1}^n (\vv_1)_j,\dots,\sum_{j=1}^n (\vv_s)_j\}$. In the case when $\vv_1,\dots, \vv_s$ coincide with 
the incidence vectors of a uniform clutter (and, in particular, of a graph), this notion 
coincides with the notion of toric subset parameterized by $\vv_1,\dots, \vv_s$, as defined in \cite{algcodes, SaVPVi11}.
\end{remark}

We denote by $\ww^\perp$  the orthogonal in $\RR^s$ of $\Span{\ww}$, with respect to the canonical inner product.

\begin{theorem}\label{thm: relation between I_T and the ideal of the minimal presentation}
Let  $\LL=(q-1)\bigl(\ww^\perp \cap \ZZ^s\bigr )$. Then $I_\TT=I_\LL$ and, in particular, $I_\TT$ is $1$-dimensional and Cohen--Macaulay.
\end{theorem}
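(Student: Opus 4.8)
The plan is to deduce the equality $I_\TT=I_\LL$ by comparing binomial generators, using Lemma~\ref{lemma: I is generated by pure binomials} to know that $I_\TT$ is generated by homogeneous pure binomials and the definition of $I_\LL$ for the other side. First I would record the identity $\LL=(q-1)(\ww^\perp\cap\ZZ^s)=(q-1)\ZZ^s\cap\ww^\perp$: the inclusion ``$\subseteq$'' is clear, and conversely if $\vv=(q-1)\cc$ with $\cc\in\ZZ^s$ and $\Span{\vv,\ww}=0$, then $\Span{\cc,\ww}=0$, so $\cc\in\ww^\perp\cap\ZZ^s$. Thus $\LL$ is exactly the set of $\vv\in\ZZ^s$ with $\Span{\vv,\ww}=0$ and all coordinates divisible by $q-1$.

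The crux is the following characterization: a homogeneous pure binomial $t^\aa-t^\bb$, with $\aa,\bb\in\NN^s$, lies in $I_\TT$ if and only if $\aa-\bb\in\LL$. Since $t^\aa-t^\bb$ is homogeneous, its evaluation on a point of $\TT$ is well defined, and because $\lambda\in K^*$ acts by $\lambda\cdot(x_1,\dots,x_s)=(\lambda^{w_1}x_1,\dots,\lambda^{w_s}x_s)$, the binomial vanishes on $\TT$ precisely when $x^\aa=x^\bb$ for every $(x_1,\dots,x_s)\in(K^*)^s$. Fixing a generator $g$ of the cyclic group $K^*$ and specializing all coordinates but the $i$-th to $1$ turns this into $g^{a_i}=g^{b_i}$, i.e.\ $a_i\equiv b_i\pmod{q-1}$; combined with homogeneity, which by the remark preceding Definition~\ref{def: weighted projective torus} says exactly that $\Span{\aa-\bb,\ww}=0$, this is equivalent to $\aa-\bb\in\LL$. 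Conversely, if $\aa-\bb=(q-1)\cc$ with $\cc\in\ww^\perp\cap\ZZ^s$, then $t^\aa-t^\bb$ is homogeneous and, for every $x\in(K^*)^s$, $x^{\aa-\bb}=(x^\cc)^{q-1}=1$ because $x^\cc\in K^*$; hence $t^\aa-t^\bb$ vanishes on $\TT$.

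Granting this, $I_\TT=I_\LL$ follows formally. Each generator $t^\aa-t^\bb$ of $I_\LL$ has $\aa-\bb\in\LL$, so by the converse direction above it is a homogeneous binomial vanishing on $\TT$, hence $I_\LL\subseteq I_\TT$. For the reverse inclusion, Lemma~\ref{lemma: I is generated by pure binomials} says $I_\TT$ is generated by homogeneous pure binomials, and by the forward direction each such generator has exponent difference in $\LL$, hence lies in $I_\LL$; thus $I_\TT\subseteq I_\LL$.

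Finally, the ``in particular'' is a standard consequence of lattice-ideal theory. As $\ww\neq 0$, the lattice $\ww^\perp\cap\ZZ^s$ has rank $s-1$, hence so does $\LL$, and therefore $\dim K[t_1,\dots,t_s]/I_\LL=s-\rank\LL=1$. For the Cohen--Macaulay property one argues as in the standard torus case (\emph{cf.}~\cite{algcodes}): $I_\LL$ contains no monomial, so in the $\ZZ^s/\LL$-grading of $K[t_1,\dots,t_s]/I_\LL$ every nonzero homogeneous component is one-dimensional, and multiplication by $t_1\cdots t_s$ is injective on components, i.e.\ $t_1\cdots t_s$ is a non-zero-divisor; a homogeneous non-zero-divisor of positive degree together with $\dim=1$ forces $\operatorname{depth}=1=\dim$, which is Cohen--Macaulayness. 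I expect this last step — turning the combinatorics of $\LL$ into the homological statement — to be the only place needing care beyond bookkeeping; the equality $I_\TT=I_\LL$ itself is a direct unwinding of the definitions together with Lemma~\ref{lemma: I is generated by pure binomials}.
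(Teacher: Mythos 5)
Your proof of the equality $I_\TT=I_\LL$ is essentially the paper's: both directions rest on evaluating a homogeneous pure binomial at points of $\TT$ with a single nontrivial coordinate equal to a generator of $K^*$ (giving $a_i\equiv b_i\pmod{q-1}$), combining this with the homogeneity condition $\Span{\aa-\bb,\ww}=0$, and invoking Lemma~\ref{lemma: I is generated by pure binomials} to reduce to binomial generators; the converse inclusion via $x^{\aa-\bb}=(x^{\cc})^{q-1}=1$ is likewise the implicit easy direction in the paper. The dimension count from $\rank\LL=s-1$ is also identical. Where you genuinely diverge is the Cohen--Macaulay step: the paper observes that the binomials $t_i^{w_1(q-1)}-t_1^{w_i(q-1)}$ lie in $I_\TT$, so that $V(I_\TT,t_i)=\set{0}$, and then cites \cite[Proposition~5.3]{OCPlVi}. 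You instead give a direct depth argument --- $I_\LL$ contains no monomial, the fine $\ZZ^s/\LL$-grading has one-dimensional components, hence $t_1\cdots t_s$ is a homogeneous non-zero-divisor of positive degree, forcing $\operatorname{depth}=1=\dim$. This is correct (it is the standard fact that lattice ideals are saturated with respect to $t_1\cdots t_s$) and has the merit of being self-contained, at the cost of importing the standard facts that $t^\aa-t^\bb\in I_\LL$ iff $\aa-\bb\in\LL$ and that lattice ideals contain no monomials; the paper's route is shorter but leans on an external proposition. Both are valid.
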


\begin{proof}
To prove the equality we need to show the inclusion $I_\TT \subset I_\LL$.
Let $t^\aa-t^\bb\in K[t_1,\dots,t_s]$ be a homogeneous binomial vanishing on $\TT(w_1,\dots,w_s)$. Fix $i\in \set{1,\dots,s}$ and let $\alpha$ be a generator of $K^*$. 
Consider $(1,\dots,1,\alpha,1,\dots,1)\in \TT(w_1,\dots,w_s)$, with $\alpha$ in the $i$-th position. Evaluating $t^\aa-t^\bb$ at this point we obtain
$$
\alpha^{a_i}-\alpha^{b_i} = 0 \iff \alpha^{a_i-b_i} = 1 \iff a_i-b_i \equiv 0 \pmod{q-1}. 
$$
Since this holds for any $i\in \set{1,\dots,s}$ we obtain $a-b\in (q-1)\NN^s$. As $t^\aa-t^\bb$ is homogeneous we
get $\Span{\aa-\bb,\ww}=0$ and hence $\aa-\bb\in \LL$. This proves that $I_\TT = I_\LL$ 
Since the rank of $\LL$ is $s-1$ we deduce that $I_\TT$ is $1$-dimensional (\emph{cf}.~\cite[Proposition~7.5]{MiStu05}). Furthermore, since 
\begin{equation}\label{eq: fat inclusion}
\bigl(t_2^{w_1(q-1)}-t_1^{w_2(q-1)},\dots,t_s^{w_1(q-1)}-t_1^{w_s(q-1)}\bigr)\subset I_\TT 
\end{equation}
and consequentially
\mbox{$V(I_\TT,t_i)=\set{0}$},
by \cite[Proposition~5.3]{OCPlVi}, $I_\TT$ is Cohen--Macaulay.
\end{proof}

\begin{definition}
Let $Q\subset \NN$ denote the submonoid of $\NN$ generated by $w_1,\dots,w_s\in \NN$. 
Since \mbox{$\gcd(w_1,\dots,w_s)=1$}, $Q$ is a \emph{numerical semigroup}, \emph{i.e.}, it has finite complement. 
The semigroup algebra, denoted by $K[Q]$ is the subalgebra of the polynomial ring $K[z]$	
given by $K[z^{w_1},\dots,z^{w_s}]$.
\end{definition}

\begin{lemma}\label{lemma: relation of lattice ideal with the semigroup algebra}
Let $\LL^\flat=\ww^\perp \cap \ZZ^s$ and consider $I_{\LL^\flat}\subset K[t_1,\dots,t_s]$ the corresponding lattice ideal.
Then $I_{\LL^\flat}$ is a homogeneous ideal
and \mbox{$K[t_1,\dots,t_s]/I_{\LL^\flat}\simeq K[Q]$}.
\end{lemma}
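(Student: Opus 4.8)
The plan is to realize both sides as quotients of $K[t_1,\dots,t_s]$ by the kernel of one explicit graded homomorphism. Consider the $K$-algebra map
\[
\phi\colon K[t_1,\dots,t_s]\lrt K[z],\qquad \phi(t_i)=z^{w_i}\ \ (1\leq i\leq s).
\]
Giving $K[z]$ its standard grading, $\phi$ is a graded homomorphism, and its image is precisely $K[z^{w_1},\dots,z^{w_s}]=K[Q]$. Thus, once we show $\Ker\phi=I_{\LL^\flat}$, the desired isomorphism $K[t_1,\dots,t_s]/I_{\LL^\flat}\simeq K[Q]$ follows. Homogeneity of $I_{\LL^\flat}$ will also drop out, but it can be seen at once from the observation recalled before Definition~\ref{def: weighted projective torus}: a generator $t^\aa-t^\bb$ of $I_{\LL^\flat}$ has $\aa-\bb\in\LL^\flat=\ww^\perp\cap\ZZ^s$, so $\Span{\aa-\bb,\ww}=0$ and the generator is homogeneous; hence $I_{\LL^\flat}$ is generated by homogeneous binomials.

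For the inclusion $I_{\LL^\flat}\subseteq\Ker\phi$, take a generating binomial $t^\aa-t^\bb$ with $\aa,\bb\in\NN^s$ and $\aa-\bb\in\LL^\flat$. Then $\Span{\aa,\ww}=\Span{\bb,\ww}=:d$, so $\phi(t^\aa)=z^d=\phi(t^\bb)$ and the binomial lies in $\Ker\phi$.

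For the reverse inclusion, I would first note that $\Ker\phi$ is a homogeneous ideal, being the kernel of a graded map, so it suffices to treat a homogeneous $f=\sum_{j=1}^r\alpha_j t^{\vv_j}\in\Ker\phi$ of degree $d$ with the $\vv_j$ pairwise distinct. Homogeneity forces $\Span{\vv_j,\ww}=d$ for every $j$, whence $\phi(f)=\bigl(\sum_{j=1}^r\alpha_j\bigr)z^d$, and $f\in\Ker\phi$ gives $\sum_{j=1}^r\alpha_j=0$. For $j\geq 2$ we have $\vv_j-\vv_1\in\ww^\perp\cap\ZZ^s=\LL^\flat$, so each binomial $t^{\vv_j}-t^{\vv_1}$ lies in $I_{\LL^\flat}$; the telescoping identity
\[
f=\sum_{j=2}^r\alpha_j\bigl(t^{\vv_j}-t^{\vv_1}\bigr)+\Bigl(\sum_{j=1}^r\alpha_j\Bigr)t^{\vv_1}=\sum_{j=2}^r\alpha_j\bigl(t^{\vv_j}-t^{\vv_1}\bigr)
\]
then exhibits $f\in I_{\LL^\flat}$. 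This is essentially the standard identification of the lattice ideal of the saturated lattice $\ww^\perp\cap\ZZ^s$ with the toric ideal of $\phi$; the only point needing care is the reduction to homogeneous elements, which is where graded\-ness of $\phi$ is used, and the telescoping step above. Combining the two inclusions gives $\Ker\phi=I_{\LL^\flat}$, and hence $K[t_1,\dots,t_s]/I_{\LL^\flat}\simeq\mathrm{Im}\,\phi=K[Q]$, completing the proof.
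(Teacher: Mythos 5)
Your proof is correct. Note, though, that the paper does not actually argue this lemma: it disposes of it with a citation to \cite[Theorem~7.3]{MiStu05} and \cite[Proposition 1.4]{herzog}, which contain exactly the general statement that the lattice ideal of the saturated lattice $\ww^\perp\cap\ZZ^s$ is the toric ideal of the semigroup map, i.e.\ the kernel of $t_i\mapsto z^{w_i}$. What you have written is a self-contained proof of that identification: homogeneity of $I_{\LL^\flat}$ from $\Span{\aa-\bb,\ww}=0$, the easy inclusion $I_{\LL^\flat}\subseteq\Ker\phi$ on generators, and the reverse inclusion by reducing to a weighted-homogeneous $f$ (legitimate, since $\phi$ is graded and hence $\Ker\phi$ is a homogeneous ideal), extracting $\sum_j\alpha_j=0$ from $\phi(f)=\bigl(\sum_j\alpha_j\bigr)z^d$, and telescoping against $t^{\vv_1}$. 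All the steps check out, including the degenerate case of a single monomial, which is excluded by $\sum_j\alpha_j=0$ with nonzero coefficients. So the difference is one of economy versus transparency: the paper leans on the general theory of lattice/toric ideals (which also gives primeness and related structural facts for free), while your argument is elementary, uses nothing beyond the definition of the lattice ideal and the weighted grading, and would make the paper self-contained at the cost of half a page.
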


\begin{proof}
See \cite[Theorem~7.3]{MiStu05} and also \cite[Proposition 1.4]{herzog}.
\end{proof}

The following lemma yields a relation between $I_\TT$ and $I_{\LL^\flat}$ is given by the following lemma.

\begin{lemma}\label{lemma: relation between the 2 lattice ideals}
Let $\LL^\flat = \ww^\perp \cap \ZZ^s$. Suppose that $I_{\LL^\flat} = (t^{\aa_1}-t^{\bb_1},\dots,t^{\aa_r}-t^{\bb_r})$, for some 
$\aa_i,\bb_i\in\NN^s$. Then $I_\TT = (t^{(q-1)\aa_1}-t^{(q-1)\bb_1},\dots,t^{(q-1)\aa_r}-t^{(q-1)\bb_r})$. 
\end{lemma}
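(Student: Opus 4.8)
The plan is to read this off from Theorem~\ref{thm: relation between I_T and the ideal of the minimal presentation}, which identifies $I_\TT$ with the lattice ideal $I_\LL$ for the lattice $\LL=(q-1)\LL^\flat$, where $\LL^\flat=\ww^\perp\cap\ZZ^s$. Write $p=q-1$ and $J=(t^{p\aa_1}-t^{p\bb_1},\dots,t^{p\aa_r}-t^{p\bb_r})$. One inclusion is immediate: for each $i$ we have $p\aa_i-p\bb_i=p(\aa_i-\bb_i)\in p\LL^\flat=\LL$ and $p\aa_i,p\bb_i\in\NN^s$, so $t^{p\aa_i}-t^{p\bb_i}$ is one of the binomials generating $I_\LL$; hence $J\subseteq I_\LL=I_\TT$.

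For the reverse inclusion I would first reduce to the \emph{primitive} binomials generating a lattice ideal. For $\cc\in\ZZ^s$ write $\cc=\cc^+-\cc^-$ with $\cc^+,\cc^-\in\NN^s$ of disjoint support. Any generator $t^{\uu}-t^{\vv}$ of $I_\LL$ (with $\uu-\vv\in\LL$) factors as $t^{\mathbf m}(t^{\cc^+}-t^{\cc^-})$, where $\cc=\uu-\vv$ and $\mathbf m$ is the componentwise minimum of $\uu$ and $\vv$; hence $I_\TT=I_\LL$ is generated by $\{\,t^{\cc^+}-t^{\cc^-}:\cc\in\LL\,\}$. Given such a $\cc\in\LL=p\LL^\flat$, write $\cc=p\dd$ with $\dd\in\LL^\flat$; since $p>0$ this forces $\cc^+=p\dd^+$ and $\cc^-=p\dd^-$.

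The key step is then a substitution trick. Let $\psi$ be the $K$-algebra endomorphism of $K[t_1,\dots,t_s]$ with $\psi(t_i)=t_i^p$, so that $\psi(t^{\ee})=t^{p\ee}$ for every $\ee\in\NN^s$; this is a well-defined ring homomorphism (it is induced on a polynomial ring by an assignment of the variables), even though $p=q-1$ need not be a power of the characteristic of $K$. Since $\dd^+-\dd^-=\dd\in\LL^\flat$, the binomial $t^{\dd^+}-t^{\dd^-}$ lies in $I_{\LL^\flat}$, and by hypothesis $t^{\dd^+}-t^{\dd^-}=\sum_{i=1}^r h_i(t^{\aa_i}-t^{\bb_i})$ for some $h_i\in K[t_1,\dots,t_s]$. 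Applying $\psi$ yields
\[
t^{\cc^+}-t^{\cc^-}=t^{p\dd^+}-t^{p\dd^-}=\sum_{i=1}^r \psi(h_i)\bigl(t^{p\aa_i}-t^{p\bb_i}\bigr)\in J.
\]
As $\cc\in\LL$ was arbitrary, every generator of $I_\TT=I_\LL$ lies in $J$, so $I_\TT\subseteq J$ and equality follows.

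I do not anticipate a serious obstacle: the only delicate points are the passage to the primitive binomials $t^{\cc^+}-t^{\cc^-}$ (which is precisely what allows one to factor the scalar $p$ out of the exponent vectors) and the observation that the non-Frobenius substitution $t_i\mapsto t_i^p$ carries a membership relation in $I_{\LL^\flat}$ term by term into one in $J$. Everything else is bookkeeping with positive and negative parts of integer vectors.
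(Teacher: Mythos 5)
Your proposal is correct and follows essentially the same route as the paper: both prove $J\subseteq I_\TT$ by noting $p(\aa_i-\bb_i)\in\LL$, and both get the reverse inclusion by expressing $t^{\dd^+}-t^{\dd^-}\in I_{\LL^\flat}$ in terms of the given generators and applying the substitution $t_i\mapsto t_i^{q-1}$. Your explicit reduction to the primitive binomials $t^{\cc^+}-t^{\cc^-}$ (factoring out $t^{\mathbf m}$) just makes precise a step the paper leaves implicit.
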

\begin{proof}
Let $J=(t^{(q-1)\aa_1}-t^{(q-1)\bb_1},\dots,t^{(q-1)\aa_r}-t^{(q-1)\bb_r})$. Since $(q-1)\aa_i-(q-1)\bb_i \in \LL$, the inclusion $J\subset I_\TT$ is 
clear. Conversely, let $t^\aa-t^\bb\in I_\TT$. Then there exist $\cc^+,\cc^- \in \NN^s$ such that $\cc^+-\cc^- \in \LL^\flat$ and $\aa-\bb = (q-1)(\cc^+-\cc^-)$.
Since $t^{\cc^+}-t^{\cc^-}\in I_{\LL^\flat}$ there exist $h_i\in K[t_1,\dots,t_s]$ such that
$t^{\cc^+}-t^{\cc^-} = \sum_{j=1}^r (t^{\aa_j}-t^{\bb_j})h_j$. Substituting in this equality $t_i^{q-1}$ for $t_i$, for every $i=1,\dots,s$, we deduce that $t^\aa-t^\bb \in J$. Hence $I_\TT\subset J$.
\end{proof}

\begin{remark}\label{rmk: when do we get a complete intersection}
In \cite{herzog}, Herzog shows that if the condition
\begin{equation}\label{eq: sufficient condition for complete intersection}
 \operatorname{lcm}(\gcd\set{w_1,\dots,w_{i-1}},w_{i})\in \Span{w_1,\dots,w_{i-1}} 
\end{equation}
is satisfied, for every $i=2,\dots,s$, then $I_{\LL^\flat}$ is generated by the binomials 
$t_{i}^{c_{i}}-\prod_{j=1}^{i-1} t_j^{r_{ij}}$,  where,  for \mbox{$i=2,\dots,s$,} \mbox{$c_i=\gcd\set{w_1,\dots,w_{i-1}}/\gcd\set{w_1,\dots,w_{i}}$},
and $r_{ij}\in \NN$ are nonnegative integers such that 
\mbox{$\operatorname{lcm}(\gcd\set{w_1,\dots,w_{i-1}},w_{i})=c_{i}w_{i}=\sum_{j=1}^{i-1} r_{ij}w_j$}. In particular, in this situation,
$I_{\LL^\flat}$ is a complete intersection and, by Lemma~\ref{lemma: relation between the 2 lattice ideals}, $I_\TT$ is also a complete intersection. 
It can be checked that (\ref{eq: sufficient condition for complete intersection}) is satisfied for $s=2$ or 
if $w_1=1$ or $2$. The case when $w_i=1$, for all $i=1,\dots,s$, is worth highlighting, for these $w_i$ (\ref{eq: fat inclusion}) is an equality.
If $s=3$ then (\ref{eq: sufficient condition for complete intersection}) is also a necessary condition for $I_{\LL^\flat}$ to 
be a complete intersection (\emph{cf.}~\cite[Theorem~3.10]{herzog}). This condition is no longer necessary when $s=4$; it can 
be shown (\emph{cf.}~\cite[Example~3.9]{FischerShapiro}) that if $\ww = (20,30,33,44)$ then $I_{\LL^\flat}$ is a complete intersection, 
despite the fact that no ordering of $20,30,22,44$ 
satisfies (\ref{eq: sufficient condition for complete intersection}). 
A recursive method for deciding whether, for a given $\ww$, the ideal $I_{\LL^\flat}$ is a complete intersection is given by Delorme in \cite{delorme}.
\end{remark}


\section{Weighted projective Reed--Muller codes}\label{sec: weighted PRM codes}

\begin{definition}\label{def: weighted projective Reed--Muller codes}
Let $X\subset \PP(w_1,\dots,w_s)$ and set $m=|X|$. Fix $\xx_1,\dots,\xx_m\in K^s$ 
systems of homogeneous coordinates for the points of $X$. Given $d\geq 0$, 
let \mbox{$\ev_d \colon K[t_1,\dots,t_s]_d \rt K^m$} be the map defined by $f\mapsto (f(\xx_1),\dots,f(\xx_m))$ for all $f\in K[t_1,\dots,t_s]_d$. The image of $\ev_d$, denoted by $C_X(d)$, 
is called \emph{the weighted projective Reed--Muller code over $X$} (or, if the context is clear, simply the code over $X$) \emph{of order $d$}.
\end{definition}

\begin{remark}
Let $\xx'_1,\dots,\xx'_m$ be a different choice of homogeneous coordinates of the points of $X$. Denote by $\ev'_d$ the corresponding evaluation map. Then,
there exist $\lambda_1,\dots,\lambda_m \in K^*$ such that $\ev'_d(K[t_1,\dots,t_s]_d)$ is the image of $\ev_d(K[t_1,\dots,t_s]_d)$ by the linear map defined by 
$(y_1,\dots,y_m) \mapsto (\lambda_1 y_1,\dots,\lambda_m y_m)$, for every $(y_1,\dots,y_m)\in K^m$. \emph{I.e.}, the $2$ codes are equivalent. 
\end{remark}

\begin{definition}\label{def: basic parameters}
The basic parameters of a linear code $0\not = C\subset K^m$ are the \emph{length}, the \emph{dimension} and the \emph{minimum distance}. The length is $m$, the dimension of the
ambient vector space; the dimension is its dimension as a vector space and the minimum distance, that is denoted by 
$\delta_C$, is given by $\min \set{\|\xx\| : \xx\in C\setminus 0}$ where $\|\xx\|$ is the \emph{Hamming weight} 
of $\xx$, \emph{i.e.}, the number of nonzero components of $\xx$. A code is said \emph{maximum distance separable} if the \emph{singleton bound}:
\[
\delta_C \leq \operatorname{length}(C) -\dim_K C +1,  
\]
which is always satisfied for a linear code, is an equality.
\end{definition}

\begin{remark}
If $C=C_X(d)$ is a weighted projective Reed--Muller code over $X\subset \PP(w_1,\dots,w_s)$ of order $d$, the length is equal to $|X|$ 
(thus is independent of $d$) and the minimum distance can be computed as $m$ minus the maximum number of zeros a homogeneous 
polynomial of degree $d$ can attain on $X$ without belonging to $I_X$. 
Maximum distance separable codes are codes that, for their length and dimension, maximize minimum distance, in other 
words, have maximum error-correcting capability.
\end{remark}

In this work we shall focus on the codes over $X=\TT(w_1,\dots,w_s)$. We abbreviate the notation for the codes to $C_\TT(d)$ and for their minimum distance to
$\delta_\TT(d)$.

\begin{proposition}\label{prop: length of the weighted torus}
The length of $C_\TT(d)$ is $(q-1)^{s-1}$. 
\end{proposition}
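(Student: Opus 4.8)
The plan is to count the points of $\TT(w_1,\dots,w_s)$ directly. Recall that $\TT(w_1,\dots,w_s)$ is the image in $\PP(w_1,\dots,w_s)$ of the set $(K^*)^s$, since a point of the weighted torus is, by definition, represented by homogeneous coordinates $(x_1,\dots,x_s)$ with all $x_i\neq 0$, and any such tuple lies in $(K^*)^s$. So the length equals $|(K^*)^s/\!\sim|$, where $\sim$ is the equivalence relation induced by the $K^*$-action $\lambda\cdot(x_1,\dots,x_s)=(\lambda^{w_1}x_1,\dots,\lambda^{w_s}x_s)$. Since $|(K^*)^s|=(q-1)^s$, it suffices to show that every orbit of this action on $(K^*)^s$ has exactly $q-1$ elements; the length is then $(q-1)^s/(q-1)=(q-1)^{s-1}$.

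First I would reduce to a stabilizer computation. For a fixed $x=(x_1,\dots,x_s)\in(K^*)^s$, the orbit size is $(q-1)/|\mathrm{Stab}(x)|$, where $\mathrm{Stab}(x)=\set{\lambda\in K^* : \lambda^{w_i}x_i=x_i \text{ for all } i}=\set{\lambda\in K^* : \lambda^{w_i}=1 \text{ for all } i}$. The key point is that this stabilizer is independent of $x$ and in fact trivial: if $\lambda^{w_i}=1$ for all $i$, then the multiplicative order of $\lambda$ divides every $w_i$, hence divides $\gcd(w_1,\dots,w_s)=1$, so $\lambda=1$. Therefore every orbit has size $q-1$, and the orbit–counting argument gives $|\TT(w_1,\dots,w_s)|=(q-1)^{s-1}$.

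The only thing to be slightly careful about is the claim that $\TT(w_1,\dots,w_s)$ is precisely the image of $(K^*)^s$ under the quotient map $K^s\setminus 0\to \PP(w_1,\dots,w_s)$, and that distinct orbits give distinct points — but both follow immediately from the definition of $\PP(w_1,\dots,w_s)$ as $(K^s\setminus 0)/K^*$ and the observation that the $K^*$-action preserves the set $\set{x : x_1\cdots x_s\neq 0}$ (a coordinate is zero before acting if and only if it is zero after). I do not anticipate any real obstacle here; the argument is entirely elementary and the use of $\gcd(w_1,\dots,w_s)=1$ is the one essential ingredient.
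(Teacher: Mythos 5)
Your proof is correct and is essentially the same as the paper's: both count $\TT(w_1,\dots,w_s)$ as the quotient $(K^*)^s/K^*$ and show the action is free (orbits of size $q-1$) because $\lambda^{w_i}=1$ for all $i$ forces $\operatorname{ord}(\lambda)\mid\gcd(w_1,\dots,w_s)=1$. Your phrasing via the stabilizer is just a mild repackaging of the paper's argument that $\lambda(x)=\mu(x)$ implies $\lambda=\mu$.
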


\begin{proof}
The length of $C_\TT(d)$ coincides with the cardinality of the set of $K$-points of $X$. 
Since this set can be seen as the quotient $(K^*)^s/K^*$ by the induced action of $K^*$,  
all we need to check is that the orbits have cardinality $q-1$. Assume that 
\[
 \lambda(x_1,\dots,x_s) = \mu (x_1,\dots,x_s)
\]
for some $\lambda,\mu \in K^*$. Then $\lambda^{w_i}x_i = \mu^{w_i}x_i \iff (\lambda/\mu)^{w_i}=1$, hence
$\operatorname{ord}(\lambda/\mu)$ divides $w_i$, for all $i$. Since, by 
assumption, $\gcd\set{a_1,\dots,a_s}=1$ we deduce that $\lambda = \mu$.
\end{proof}

Let $M$ be a finitely generated graded $K[t_1,\dots,t_s]$-module. The Hilbert function of $M$ is the function $\varphi_M\colon \ZZ \rt \ZZ$ defined by 
$\varphi_M(d) = \dim_K M_d$. This function is quasi-polynomial of degree $\dim M-1$, \emph{i.e.}, there exist a
positive integer $g$ (the period) and $P_0,\dots,P_{g-1}$, polynomials of same leading term and of degree 
$\dim M -1$, such that, for $d\gg 0$, $\varphi_M(d)=P_i(d)$, where $d\equiv i \pmod{g}$ 
(\emph{cf}.~Serre's Theorem \cite[Theorem~4.4.3]{BH}).

\begin{definition} The index of regularity of $M$ is the least $r\geq 0$ such that for $d\geq r$, 
\[
 d\equiv i \pmod{g} \implies \varphi_M(d)=P_i(d).
\]
\end{definition}

The Hilbert series of $M$ is given by $H_M(t)=\sum_{d=0}^\infty \varphi_M(d)t^d$. $H_M(t)$ is a rational function 
(\emph{cf.}~\cite[Proposition 4.1.3]{Villa} or \cite[Proposition 4.4.1]{BH}) and its degree is called the $a$-invariant of $M$. By Serre's theorem the 
index of regularity of $M$ is equal to $\deg H_M(t) + 1$. 
\smallskip 

When $M= K[t_1,\dots,t_s]/I_\TT$, we abbreviate the notation for its Hilbert function and Hilbert series
to $\varphi_\TT$ and $H_\TT(T)$, respectively. We remark that $\dim_K C_\TT(d)=\varphi_\TT(d)$. Further, 
since $I_\TT$ is $1$-dimensional, $\varphi_\TT$ becomes constant, equal to $m=(q-1)^{s-1}$, the number of points of $\TT(w_1,\dots,w_s)$, for $d$ greater than 
or equal to the index of regularity of $K[t_1,\dots,t_s]/I_\TT$. For such $d$, 
$C_X(d)$ is a trivial code.

\begin{remark}
If one of the weights is equal to $1$, say $w_1=1$, then multiplication by $t_1$ induces a 
monomorphism $C_\TT(d)\hookrightarrow C_X(d+1)$. Moreover, one can also check that $\delta_\TT(d)\geq \delta_\TT(d+1)$. 
In the standard case (\emph{i.e.}~when
all $w_i=1$), for $d$ up to the index of regularity minus $1$, the inclusion is proper and the inequality is strict, implying that 
$\dim_K C_X(d)$ is strictly increasing and $\delta_{C_X(d)}$ is strictly decreasing.
\end{remark}

Let $G:=\NN\setminus Q$ denote the set of gaps of the numerical semigoup $Q$ and denote by $\mathsf{g}_Q:=\max G$ the Frobenius number of $Q$.

\begin{theorem}\label{thm: Hilbert Series}
The Hilbert series of $K[t_1,\dots,t_s]/I_\TT$ is given by 
\begin{equation}\label{eq: Hilbert Series}
H_{\TT}(t)=\frac{\bigl(\frac{1}{1-t^{q-1}}-\sum_{a\in G} t^{a(q-1)}\bigr)\prod_{i=1}^s (1-t^{w_i(q-1)})}{\prod_{i=1}^s (1-t^{w_i})}\cdot 
\end{equation}
\end{theorem}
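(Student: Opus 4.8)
The plan is to compute $H_\TT(t)$ by passing through the isomorphism $K[t_1,\dots,t_s]/I_{\LL^\flat}\simeq K[Q]$ of Lemma~\ref{lemma: relation of lattice ideal with the semigroup algebra}, and then to account for the effect of replacing $I_{\LL^\flat}$ by $I_\TT$. First I would compute the Hilbert series $H_{\LL^\flat}(t)$ of $K[t_1,\dots,t_s]/I_{\LL^\flat}$. Since this ring is isomorphic, as a graded $K$-algebra, to $K[Q]=K[z^{w_1},\dots,z^{w_s}]$ with $\deg z = 1$, its degree-$a$ component is $1$-dimensional when $a\in Q$ and $0$ otherwise; hence $H_{\LL^\flat}(t)=\sum_{a\in Q}t^a = \frac{1}{1-t}-\sum_{a\in G}t^a$, where $G=\NN\setminus Q$ is the (finite) set of gaps. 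Here I must be careful about the grading: in $K[t_1,\dots,t_s]$ the variable $t_i$ has degree $w_i$, and under the isomorphism $t_i\mapsto z^{w_i}$, so degrees match and the computation of $H_{\LL^\flat}$ is exactly as stated.

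The next step is to relate $H_\TT(t)$ to $H_{\LL^\flat}(t)$ using Lemma~\ref{lemma: relation between the 2 lattice ideals}, which says that if $I_{\LL^\flat}$ is generated by $t^{\aa_j}-t^{\bb_j}$ then $I_\TT$ is generated by $t^{(q-1)\aa_j}-t^{(q-1)\bb_j}$. The cleanest way to see the effect on Hilbert series is to observe that substituting $t_i \mapsto t_i^{q-1}$ is an injective graded ring homomorphism $\sigma\colon K[t_1,\dots,t_s]\to K[t_1,\dots,t_s]$ that multiplies degrees by $q-1$, and $\sigma$ carries $I_{\LL^\flat}$ into $I_\TT$; more precisely, $K[t_1,\dots,t_s]/I_\TT$ is a free module over the subring $\sigma(K[t_1,\dots,t_s])/\sigma(I_{\LL^\flat})\simeq (K[t_1,\dots,t_s]/I_{\LL^\flat})^{[q-1]}$ (the $(q-1)$-fold Veronese-type regrading), with a basis given by monomials $t^\uu$ where $0\le u_i\le q-2$. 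This yields the identity
\[
H_\TT(t) \;=\; H_{\LL^\flat}\!\bigl(t^{q-1}\bigr)\cdot \prod_{i=1}^s\bigl(1+t^{w_i}+t^{2w_i}+\cdots+t^{(q-2)w_i}\bigr)
\;=\; H_{\LL^\flat}\!\bigl(t^{q-1}\bigr)\cdot \prod_{i=1}^s\frac{1-t^{w_i(q-1)}}{1-t^{w_i}}.
\]
Substituting $H_{\LL^\flat}(t^{q-1}) = \frac{1}{1-t^{q-1}}-\sum_{a\in G}t^{a(q-1)}$ gives exactly \eqref{eq: Hilbert Series}.

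The main obstacle is justifying the freeness/basis claim in the previous paragraph rigorously. One route is to note that $K[t_1,\dots,t_s]/I_\TT = I_\LL$ with $\LL = (q-1)(\ww^\perp\cap\ZZ^s)$ (Theorem~\ref{thm: relation between I_T and the ideal of the minimal presentation}), so a $K$-basis of the quotient is indexed by the image of $\NN^s$ in $\NN^s/\LL$; since $\LL\subseteq (q-1)\ZZ^s$, every class has a unique representative of the form $\uu + (q-1)\vv$ with $0\le u_i\le q-2$ and $\vv\in\NN^s$, and the constraint imposed by $\LL$ only involves the $\vv$-part, matching the $\NN^s/\LL^\flat$ indexing of a basis of $K[t_1,\dots,t_s]/I_{\LL^\flat}$. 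Tracking degrees through this bijection ($\deg(\uu+(q-1)\vv) = \Span{\uu,\ww} + (q-1)\Span{\vv,\ww}$) gives the product decomposition of Hilbert series directly, bypassing any module-theoretic language. A careful treatment also needs the standard fact that $I_\LL$ has the expected Hilbert function, i.e. that distinct classes in $\NN^s/\LL$ give linearly independent monomials modulo $I_\LL$; this is a well-known property of lattice ideals (see \cite[Chapter~7]{MiStu05}) and may simply be cited.
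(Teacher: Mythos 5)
Your proof is correct, and its overall skeleton coincides with the paper's: both arguments first compute the Hilbert series of $K[t_1,\dots,t_s]/I_{\LL^\flat}\simeq K[Q]$ as $\tfrac{1}{1-t}-\sum_{a\in G}t^a$ and then pass from $I_{\LL^\flat}$ to $I_\TT$ by a ``rescale by $q-1$'' step. The difference lies in how that second step is justified. The paper applies Lemma~\ref{lemma: relation between the 2 lattice ideals} to get generators $t^{(q-1)\aa_j}-t^{(q-1)\bb_j}$ of $I_\TT$ and then invokes Lemma~3.7 of \cite{NeVPVi} as a black box for the resulting transformation of the Hilbert series. You instead prove the identity
\[
H_\TT(t)=H_{\LL^\flat}\bigl(t^{q-1}\bigr)\prod_{i=1}^s\frac{1-t^{w_i(q-1)}}{1-t^{w_i}}
\]
directly from the lattice structure: since $I_\TT=I_\LL$ with $\LL=(q-1)\LL^\flat$ (Theorem~\ref{thm: relation between I_T and the ideal of the minimal presentation}), the standard monomial basis of $K[t_1,\dots,t_s]/I_\LL$ indexed by the classes of $\NN^s$ modulo $\LL$ factors, via the unique decomposition $\aa=\uu+(q-1)\vv$ with $0\le u_i\le q-2$ and $\vv\in\NN^s$, into the box $\{0,\dots,q-2\}^s$ times the classes of $\NN^s$ modulo $\LL^\flat$, with degrees adding as $\Span{\uu,\ww}+(q-1)\Span{\vv,\ww}$; this is a complete and correct argument, and you rightly flag that the one external ingredient is the standard fact that distinct classes give linearly independent monomials modulo a lattice ideal. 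What your route buys is a self-contained proof that bypasses both Lemma~\ref{lemma: relation between the 2 lattice ideals} and the citation of \cite{NeVPVi}; what the paper's route buys is brevity on the page, at the cost of outsourcing exactly this combinatorial content to the external reference.
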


\begin{proof}
Let $M=K[t_1,\dots,t_s]/I_{\LL^\flat}$. By Lemma~\ref{lemma: relation of lattice ideal with the semigroup algebra}, $M\simeq K[Q]$, hence
\begin{equation}\label{eq: first Hilbert Series}
\textstyle H_{M} = \sum_{a\in Q} t^a = \frac{1}{1-t} - \sum_{a\in G} t^a =\displaystyle 
\frac{\bigl( \frac{1}{1-t} - \sum_{a\in G} t^a \bigr)\prod_{i=1}^s (1-t^{w_i})}{\prod_{i=1}^s (1-t^{w_i})}\cdot 
 \end{equation}
Notice that, since $(1-t^{w_1})/(1-t)$ is a polynomial, the numerator of (\ref{eq: first Hilbert Series}) is a polynomial. 
Let \mbox{$I_{\LL^\flat} = (t^{\aa_1}-t^{\bb_1},\dots,t^{\aa_r}-t^{\bb_r})$}, for some 
$\aa_i,\bb_i\in\NN^s$. Then, by Lemma~\ref{lemma: relation between the 2 lattice ideals}, $$I_\TT = (t^{(q-1)\aa_1}-t^{(q-1)\bb_1},\dots,t^{(q-1)\aa_r}-t^{(q-1)\bb_r})$$
and by \cite[Lemma~3.7]{NeVPVi} we get (\ref{eq: Hilbert Series}).
\end{proof}

\begin{corollary}\label{cor: regularity}
The index of regularity of $K[t_1,\dots,t_s]/I_\TT$ is $(q-2)\bigl(\sum_{i=1}^s  w_i + \mathsf{g}_Q \bigr ) +\mathsf{g}_Q+1$.
\end{corollary}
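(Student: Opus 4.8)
The plan is to read off the index of regularity from the Hilbert series computed in Theorem~\ref{thm: Hilbert Series}, using the fact (recalled just before the statement) that the index of regularity equals $\deg H_\TT(t) + 1$, where $\deg$ denotes the degree of the rational function. So the whole task reduces to computing $\deg H_\TT(t)$ from the formula
\[
H_{\TT}(t)=\frac{\bigl(\frac{1}{1-t^{q-1}}-\sum_{a\in G} t^{a(q-1)}\bigr)\prod_{i=1}^s (1-t^{w_i(q-1)})}{\prod_{i=1}^s (1-t^{w_i})}.
\]
First I would rewrite the parenthetical factor: $\frac{1}{1-t^{q-1}}-\sum_{a\in G} t^{a(q-1)} = \sum_{a\in Q} t^{a(q-1)}$, which is a power series, not yet obviously of a tractable degree; the cleaner move is to clear the $1-t^{q-1}$ denominator and write the numerator of the whole expression as $\bigl(1 - (1-t^{q-1})\sum_{a\in G}t^{a(q-1)}\bigr)\prod_{i=1}^s (1-t^{w_i(q-1)})$ over the denominator $(1-t^{q-1})\prod_{i=1}^s(1-t^{w_i})$. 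Then $\deg H_\TT = \deg(\text{numerator}) - \deg(\text{denominator})$, since both are honest polynomials.

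Next I would compute each piece. The denominator $(1-t^{q-1})\prod_{i=1}^s(1-t^{w_i})$ has degree $(q-1) + \sum_{i=1}^s w_i$. For the numerator, the factor $\prod_{i=1}^s (1-t^{w_i(q-1)})$ contributes degree $(q-1)\sum_{i=1}^s w_i$. The remaining factor is $1 - (1-t^{q-1})\sum_{a\in G}t^{a(q-1)}$; since $G$ is a finite set with $\max G = \mathsf{g}_Q$, the polynomial $\sum_{a\in G}t^{a(q-1)}$ has degree $\mathsf{g}_Q(q-1)$, so $(1-t^{q-1})\sum_{a\in G}t^{a(q-1)}$ has degree $(\mathsf{g}_Q+1)(q-1)$. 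I should check that the leading term does not cancel against the $1$: it does not, since $(\mathsf{g}_Q+1)(q-1) \geq q-1 > 0$ as $G$ is nonempty (equivalently $\mathsf{g}_Q \geq 1$, valid whenever $Q \neq \NN$; the degenerate case $s=1$, $w_1=1$ giving $Q=\NN$ can be handled separately or excluded since then $\TT$ is a point). Hence the numerator has degree $(q-1)\sum_{i=1}^s w_i + (\mathsf{g}_Q+1)(q-1)$.

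Putting this together,
\[
\deg H_\TT(t) = (q-1)\sum_{i=1}^s w_i + (\mathsf{g}_Q+1)(q-1) - (q-1) - \sum_{i=1}^s w_i = (q-2)\sum_{i=1}^s w_i + (q-2)\mathsf{g}_Q,
\]
and adding $1$ gives the claimed value $(q-2)\bigl(\sum_{i=1}^s w_i + \mathsf{g}_Q\bigr) + 1$; wait — the statement reads $(q-2)\bigl(\sum w_i + \mathsf{g}_Q\bigr) + \mathsf{g}_Q + 1$, so I must recheck: the factor $\sum_{a\in G}t^{a(q-1)}$ includes $a$ ranging over all gaps, and multiplying by $(1-t^{q-1})$ shifts the top term to exponent $\mathsf{g}_Q(q-1) + (q-1)$, but one must confirm whether it is really $(\mathsf{g}_Q+1)(q-1)$ or whether an extra $\mathsf{g}_Q$ appears from a more careful bookkeeping of which $1/(1-t)$-type simplification is being invoked via \cite[Lemma~3.7]{NeVPVi}; the safe route is to track the constant $\mathsf{g}_Q$ discrepancy by instead computing $\deg$ of the original four-factor expression directly without clearing $1-t^{q-1}$, noting $\deg\bigl(\sum_{a\in Q}t^{a(q-1)}\bigr)$ is not finite so one genuinely must clear it and recount. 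The honest obstacle here is purely this arithmetic of exponents: getting the $\mathsf{g}_Q$ term right requires being careful about whether $\frac{1}{1-t^{q-1}} - \sum_{a\in G}t^{a(q-1)}$ should be cleared as $\frac{1 - (1-t^{q-1})\sum_{a\in G}t^{a(q-1)}}{1-t^{q-1}}$ and then whether the numerator of the full fraction, after cancelling the common factor $1-t^{q-1}$ against one factor of $\prod(1-t^{w_i(q-1)})$ (possible only when some $w_i=1$, which need not hold), forces a different reduced form. I would therefore do the degree count on the \emph{unreduced} rational function $H_\TT(t) = \frac{N(t)}{D(t)}$ with $N(t) = \bigl(1-(1-t^{q-1})\sum_{a\in G}t^{a(q-1)}\bigr)\prod_{i=1}^s(1-t^{w_i(q-1)})$ and $D(t) = (1-t^{q-1})\prod_{i=1}^s(1-t^{w_i})$, verify no leading-coefficient cancellation in $N$, and conclude $\deg H_\TT = \deg N - \deg D$; reconciling the final constant with the stated formula is then a one-line check, and if a discrepancy genuinely remains it points to the precise form of the term $\sum_{a\in G}t^{a(q-1)}$ versus $\sum_{a\in G}t^{(a+1)(q-1)}$ coming out of \cite[Lemma~3.7]{NeVPVi}, which I would pin down by re-deriving that shift. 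Finally I would remark that $K[t_1,\dots,t_s]/I_\TT$ being Cohen--Macaulay and $1$-dimensional (Theorem~\ref{thm: relation between I_T and the ideal of the minimal presentation}) guarantees the Hilbert function is eventually the constant quasi-polynomial $(q-1)^{s-1}$, so the index of regularity is well-defined and the formula $\deg H_\TT(t)+1$ applies without caveat.
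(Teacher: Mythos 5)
Your setup is exactly the right one, and it is the paper's own (implicit) argument: the corollary is read off from Theorem~\ref{thm: Hilbert Series} by computing $\deg H_\TT(t)$ and adding $1$. The only genuine problem is that you miscarry the final subtraction and then go hunting for the resulting discrepancy in the wrong place. With $N(t)=\bigl(1-(1-t^{q-1})\sum_{a\in G}t^{a(q-1)}\bigr)\prod_{i=1}^s(1-t^{w_i(q-1)})$ and $D(t)=(1-t^{q-1})\prod_{i=1}^s(1-t^{w_i})$, you correctly get $\deg N=(\mathsf{g}_Q+1)(q-1)+(q-1)\sum_i w_i$ and $\deg D=(q-1)+\sum_i w_i$; but then
\[
\deg N-\deg D=(q-2)\sum_{i=1}^s w_i+\bigl((\mathsf{g}_Q+1)(q-1)-(q-1)\bigr)=(q-2)\sum_{i=1}^s w_i+(q-1)\mathsf{g}_Q,
\]
not $(q-2)\sum_i w_i+(q-2)\mathsf{g}_Q$ as you wrote. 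Since $(q-1)\mathsf{g}_Q=(q-2)\mathsf{g}_Q+\mathsf{g}_Q$, adding $1$ gives precisely $(q-2)\bigl(\sum_i w_i+\mathsf{g}_Q\bigr)+\mathsf{g}_Q+1$, i.e.\ the stated formula. The subsequent paragraph about whether the correct exponent is $a(q-1)$ versus $(a+1)(q-1)$ and about re-deriving \cite[Lemma~3.7]{NeVPVi} is a red herring: Theorem~\ref{thm: Hilbert Series} is to be taken as given, and its formula already yields the corollary. Your other worry --- whether cancelling a common factor of $N$ and $D$ could change the answer --- is also moot, since the degree of a rational function satisfies $\deg(N/D)=\deg N-\deg D$ for any polynomial representation with nonvanishing leading terms. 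Your checks that the top term of $N$ does not cancel against the constant $1$ (using $G\neq\emptyset$) and that Cohen--Macaulayness plus $\dim=1$ make the index of regularity meaningful are correct and worth keeping.
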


\begin{example} \label{example}
Suppose $K=\operatorname{GF}(4)$, $X = \TT(3,4,5)$ and consider the corresponding family of codes $C_\TT(d)$. By Proposition~\ref{prop: length of the weighted torus} 
these are codes of length $9$. Using \cite{M2}, we can check that the ideal $I_\TT$ is minimally generated by the binomials $t_2^6+t_1^3t_3^3$, $t_1^9+t_2^3t_3^3$, $t_1^6t_2^3+t_3^6$ and thus 
is not a complete intersection. From Theorem~\ref{thm: Hilbert Series}, 
\[
H_\TT(t)= \frac{ 1-t^{24}-t^{27}-t^{30}+t^{39}+t^{42}}{(1-t^5)(1-t^4)(1-t^3)}\cdot
\]
Hence the index of regularity of $K[t_1,\dots,t_s]/I_\TT$ is $42-12+1=31$. This number can also be computed using Corollary~\ref{cor: regularity}.

\begin{table}[h]\caption{Parameters of $C_\TT(d)$, with $\ww=(3,4,5)$ and $K=\operatorname{GF}(4)$}\label{table: example of the codes over a w torus} \label{table: a code over GF(4)}
\newcolumntype{x}[1]{>{\hspace{0pt}}p{#1}}
\renewcommand{\arraystretch}{1.3}
\vspace{-.3cm}
\begin{flushleft}
\begin{tabular}[h]{x{.8cm}x{.45cm}x{.45cm}x{.45cm}x{.45cm}x{.45cm}x{.45cm}x{.45cm}x{.45cm}x{.45cm}x{.45cm}x{.45cm}x{.45cm}x{.45cm}x{.45cm}}
 $d$       & 0 & 1  & 2  & 3 & 4   &  5 & 6  &  7 &  8 &  9 & 10 & 11 & 12 & 13 	 \tabularnewline\noalign{\hrule height 1pt} 
 $\dim$    & 1 & 0  & 0  & 1 & 1   &  1 & 1  &  1 &  2 &  2 & 2  &  2 &  3 & 3   	 \tabularnewline\noalign{\hrule height 0.25pt} 
 $\delta$  & 9 & -- & -- & 9 & 9   &  9 & 9  &  9 &  6 &  6 & 6  &  6 &  6 & 6           \tabularnewline\noalign{\hrule height 1pt}
\end{tabular}
\end{flushleft}
\vspace{.25cm}
\begin {flushright}
\begin{tabular}[h]{x{.45cm}x{.45cm}x{.45cm}x{.45cm}x{.45cm}x{.45cm}x{.45cm}x{.45cm}x{.45cm}x{.45cm}x{.45cm}x{.45cm}x{.45cm}x{.45cm}x{.45cm}x{.45cm}x{.45cm}}
  14 & 15  & 16  & 17 & 18 & 19 & 20 & 21 & 22 & 23 & 24 & 25 & 26 & 27 & 28 & 29 & 30 	 \tabularnewline\noalign{\hrule height 1pt} 
   3 &  4  &  4  &  4 &  5 &  5 & 6  &  6 &  6 &  7 &  7 &  8 &  8 &  7 &  9 & 9  & 8 	 \tabularnewline\noalign{\hrule height 0.25pt} 
   6 &  3  &  3  &  4 &  3 &  3 & 3  &  3 &  3 &  2 &  2 &  2 &  2 &  2 &  1 & 1  & 2     \tabularnewline\noalign{\hrule height 1pt}
\end{tabular}
\end{flushright}
\end{table}
Table~\ref{table: a code over GF(4)} shows the dimension and minimum distance of $C_\TT(d)$, for $d=0,\dots,30$, computed using \cite{M2}. 
One feature to bear in mind is that, unlike standard projective Reed--Muller codes, $\dim_K C_\TT(d)$ is not strictly increasing and 
$\delta_\TT(d)$ is not strictly decreasing. Nevertheless, this family of codes is not necessarily redundant. For example, the two
$4$-dimensional codes with equal minimum distance ($d=15$ and $16$) are not equivalent. Indeed, these codes have generating matrices in standard form $(I_4|A)$ and
$(I_4|B)$ where $A,B\in M_{4\times 5}\operatorname{GF}(4)$ are given by:
\[
\renewcommand{\arraystretch}{.75}
\left (
\begin{array}{ccccc}
1& 1& 0& 0& 0\\ 
0& 0& 1& 1& 0\\ 
1& \alpha& \alpha& \alpha+1& \alpha+1\\ 
1& \alpha& \alpha& \alpha+1& \alpha
\end{array}
\right )
\quad \text{and} \quad  
\left (
\begin{array}{ccccc}
\alpha+1 & \alpha+1 & \alpha +1 & 1 & 1 \\ 
0 & 0 & 1 & 0 & 1\\ 
\alpha & \alpha+1 & \alpha+1 & 1 & 1\\  
0 & 1 & 0 & 1 &  0
\end{array}
\right ).
\]
\end{example}
\medskip


\section{Codes over $\TT(w_1,w_2)$}\label{sec: codes in 1-dim}

In this section we study the weighted projective Reed--Muller codes over a $1$-dimensional torus $\TT(w_1,w_2)$. In this case $I_\TT$ is always a complete intersection 
(\emph{cf.}~Remark~\ref{rmk: when do we get a complete intersection}):
 $$I_\TT=\bigl(t_1^{(q-1)w_2}-t_2^{(q-1)w_1}\bigr).$$ 
By a classical result of Sylvester, the Frobenius number of $Q=\Span{w_1,w_2}$ is $\mathsf{g}_Q=w_1w_2-w_1-w_2$. According to Corollary~\ref{cor: regularity}, the index of 
regularity of $K[t_1,\dots,t_s]/I_\TT$ is $(q-1)w_1w_2-w_1-w_2+1$. Hence, we restrict to the range $1\leq d \leq (q-1)w_1w_1 - w_1 -w_2$. 
We will show below in Corollary~\ref{cor: MDS codes} that all weighted projective Reed--Muller codes over a $1$-dimensional (weighted) torus
are maximum distance separable codes.

Given a semigroup $Q\subset \NN$, let us denote by 
$\chi_Q\colon \NN \rt \set{0,1}$ the characteristic function of $Q\subset \NN$, \emph{i.e.},
the function given by $\chi_Q(d)=1$ if and only if $d\in Q$ and $\chi_Q(d)=0$ otherwise. 
We use this function for the semigroup $Q=\Span{w_1,w_2}$ only; to ease notation we will write simply $\chi$.

\begin{proposition}\label{prop: dimension of the code over the torus}
Let $0\leq d \leq w_1w_2(q-1)-w_1-w_2$. 
Write $d = kw_1w_2 + l$, where $k\geq 0$ and $0\leq l < w_1w_2$. Then, 
 $\dim_K C_\TT(d)=k+\chi(l)$.
\end{proposition}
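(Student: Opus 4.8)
The plan is to turn the computation of $\dim_K C_\TT(d)=\varphi_\TT(d)=\dim_K\bigl(K[t_1,t_2]/I_\TT\bigr)_d$ into a lattice-point count. Since, as recalled at the start of this section, $I_\TT=(f)$ with $f=t_1^{(q-1)w_2}-t_2^{(q-1)w_1}$ a homogeneous element of degree $D:=w_1w_2(q-1)$, and $f$ is a nonzerodivisor in the domain $K[t_1,t_2]$, the graded short exact sequence
\[
0 \longrightarrow K[t_1,t_2](-D) \xrightarrow{\;\cdot f\;} K[t_1,t_2] \longrightarrow K[t_1,t_2]/I_\TT \longrightarrow 0
\]
yields $\varphi_\TT(d)=r(d)-r(d-D)$, where $r(n):=\dim_K K[t_1,t_2]_n=\#\set{(a,b)\in\NN^2 : aw_1+bw_2=n}$ (with the convention $r(n)=0$ for $n<0$). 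Because $d\le w_1w_2(q-1)-w_1-w_2<D$, the term $r(d-D)$ vanishes, so the statement reduces to proving $r(kw_1w_2+l)=k+\chi(l)$ for all $k\ge 0$ and $0\le l<w_1w_2$.

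To this end I would establish two elementary facts about $r$. First, for $0\le l<w_1w_2$ one has $r(l)=\chi(l)$: two distinct representations $(a,b)\ne(a',b')$ of the same $l$ have $a-a'$ divisible by $w_2$ (since $\gcd(w_1,w_2)=1$), hence $\max(a,a')\ge w_2$ and $l\ge w_1w_2$, a contradiction; so $r(l)\le 1$, while $r(l)\ge 1$ holds precisely when $l\in Q=\Span{w_1,w_2}$, i.e.\ precisely when $\chi(l)=1$. Second, $r(n+w_1w_2)=r(n)+1$ for every $n\ge 0$: the map $(a,b)\mapsto(a+w_2,b)$ injects the representations of $n$ into those of $n+w_1w_2$, its image is exactly the set of representations $(a,b)$ of $n+w_1w_2$ with $a\ge w_2$, and there is exactly one remaining representation, namely the unique one with $0\le a<w_2$ (it exists because $\gcd(w_1,w_2)=1$, and its second coordinate $b=(n+w_1w_2-aw_1)/w_2\ge w_1/w_2>0$ is nonnegative). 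Iterating the second fact $k$ times and then invoking the first gives $r(kw_1w_2+l)=k+r(l)=k+\chi(l)$, as desired.

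The argument is routine throughout; the only point that needs a little care is the bookkeeping in the recursion $r(n+w_1w_2)=r(n)+1$, specifically verifying that the ``new'' representation (the one whose $a$-coordinate lies in $[0,w_2)$) indeed has nonnegative second coordinate — this is exactly where $\gcd(w_1,w_2)=1$ and the inequality $a<w_2$ are used. As an alternative to the short exact sequence, the identity $\varphi_\TT(d)=r(d)-r(d-D)$ can be read off directly from the Hilbert series $H_\TT(t)=(1-t^{D})/\bigl((1-t^{w_1})(1-t^{w_2})\bigr)$, which is the $s=2$ instance of Theorem~\ref{thm: Hilbert Series}; the counting argument for $r$ is then unchanged.
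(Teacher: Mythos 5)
Your proof is correct and follows essentially the same route as the paper's: both reduce, via the Hilbert series $H_\TT(t)=\bigl(1-t^{(q-1)w_1w_2}\bigr)/\bigl((1-t^{w_1})(1-t^{w_2})\bigr)$, to counting the representations $d=aw_1+bw_2$ with $a,b\in\NN$, and both counts hinge on the fact that $a$ is determined modulo $w_2$. The only difference is organisational: you package the count as the recursion $r(n+w_1w_2)=r(n)+1$ with base case $r(l)=\chi(l)$, whereas the paper exhibits $k+\chi(l)$ explicit factorizations and then proves the matching upper bound; both are sound.
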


\begin{proof} 
The Hilbert series of $I_\TT$ is
\[
H_\TT (t) = \frac{1- t^{(q-1)w_1w_2}}{(1-t^{w_1})(1-t^{w_2})} = \bigl(1+ t^{w_1} + \cdots + t^{(q-1)w_1w_2-w_1}  \bigr)\bigl(1+t^{w_2}  + t^{2w_2} +\cdots \bigr).
\]
Hence, the dimension of $C_\TT(d)$ coincides with the coefficient of the monomial in $t^d$ on the right hand side of the above equation. 
Suppose that $a,b\in \NN$ are such that $d=aw_1+bw_2$. Then $aw_1\leq aw_1+bw_2 < (q-1)w_1w_2$ implies that 
$a\leq (q-1)w_2-1$. Hence the coefficient of $t^d$ on the right hand side of the equation, is 
the number of pairs $(a,b)\in \NN^2$ such that $d=aw_1+bw_2$, or, in the context of numerical semigroups, the number of \emph{factorizations}
of $d$ in $Q=\Span{w_1,w_2}$. 
\smallskip

Let us compute this number. Since $\mathsf{g}_Q=w_1w_2-w_1-w_2$, we see that $l+(1-\chi(l))w_1w_2\in Q$. Let 
$a,b\in \NN$ be such that $l+(1-\chi(l))w_1w_2 = aw_1 + bw_2$. Then
\[
d = kw_1w_2 +l =  (a+iw_2)w_1+(b+(k-1+\chi(l)-i)w_1)w_2
\]
for $i=0,\dots,k-1+\chi(l)$, yields $k+\chi(l)$ distinct factorizations of $d$. Consider $\set{a_iw_1+b_iw_2}_{i=1}^r$ 
the set of all factorizations of $d$. We may assume $a_r>a_{r-1}>\cdots >a_1\geq 0$. 
Since the difference $a_i-a_{i-1}$ must be divisible by $w_2$ we get $a_r \geq (r-1)w_2$. Therefore 
\begin{equation}\label{eq: will give l in Q}
d-(r-1)w_1w_2 = (a_r-(r-1)w_2)w_1 + b_rw_2\in Q=\Span{w_1,w_2}.
\end{equation}
Additionally,  $(r-1)w_1w_2\leq a_rw_1\leq d = kw_1w_2+l\leq kw_1w_2$, hence $r\leq k +1$. Now, 
if $r=k+1$, then, by (\ref{eq: will give l in Q}), $l\in Q$. This shows that 
$r\leq k + \chi(l)$.
\end{proof}

Let us denote by $\alpha\in K^*$ a choice of generator 
of the cyclic group $K^*$. Given a homogeneous $f\in K[t_1,\dots,t_s]$, we denote by $V(f)$ its set of zeros in $\PP(w_1,\dots,w_s)$.

\begin{lemma}\label{lemma: vanishing ideal of a point}
For each $0\leq r\leq q-2$, 
\mbox{$V(t_1^{w_2}-\alpha^r t_2^{w_1})\subset \TT(w_1,w_2)$} consists of a single point. Moreover,
as $r$ varies in $\set{0,\dots,q-2}$, every point of $\TT(w_1,w_2)$ is obtained in this way. 
\end{lemma}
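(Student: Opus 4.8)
The plan is to work with explicit homogeneous coordinates. A point of $\TT(w_1,w_2)$ has coordinates $(x_1,x_2)$ with $x_1,x_2\in K^*$, and the $K^*$-action identifies $(x_1,x_2)$ with $(\lambda^{w_1}x_1,\lambda^{w_2}x_2)$. First I would observe that the binomial $t_1^{w_2}-\alpha^r t_2^{w_1}$ is homogeneous of degree $w_1w_2$, so its zero set in $\PP(w_1,w_2)$ is well defined, and since $w_1,w_2\geq 1$ the only zero with $x_1=0$ or $x_2=0$ would force both to vanish; hence $V(t_1^{w_2}-\alpha^r t_2^{w_1})\subset\TT(w_1,w_2)$ automatically. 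So the vanishing condition on a torus point is $x_1^{w_2}=\alpha^r x_2^{w_1}$ with $x_1,x_2\in K^*$.

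Next I would count solutions up to the action. Fix a representative by using the action to normalize: since $\gcd(w_1,w_2)=1$ there exist integers $u,v$ with $uw_1+vw_2=1$; given any $(x_1,x_2)\in(K^*)^2$, choosing $\lambda$ appropriately (using that $K^*$ is cyclic of order $q-1$ and $\gcd(w_1,q-1)$ may not be $1$, so one must be a little careful) one can bring $x_2$ to $1$ only after passing to the subgroup of $w_2$-th powers; a cleaner route is to parametrize. Writing $x_1=\alpha^{a}$, $x_2=\alpha^{b}$, the equation $x_1^{w_2}=\alpha^r x_2^{w_1}$ becomes $aw_2\equiv r+bw_1\pmod{q-1}$, while the $K^*$-action sends $(a,b)\mapsto(a+w_1 c,\, b+w_2 c)$ for $c\in\ZZ/(q-1)$. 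So I must show: for each $r$, the set of $(a,b)\in(\ZZ/(q-1))^2$ with $aw_2-bw_1\equiv r$ is a single orbit of the subgroup generated by $(w_1,w_2)$. The difference of two solutions $(a,b),(a',b')$ satisfies $(a-a')w_2\equiv(b-b')w_1\pmod{q-1}$; setting $\delta=a-a'$, $\eta=b-b'$ I would argue $\delta w_2\equiv\eta w_1$ forces $(\delta,\eta)$ to be a multiple of $(w_1,w_2)$ modulo $q-1$ — this is where I use $\gcd(w_1,w_2)=1$, possibly via the map $(\ZZ/(q-1))^2\to\ZZ/(q-1)$, $(\delta,\eta)\mapsto \delta w_2-\eta w_1$, whose kernel I claim is exactly the cyclic subgroup generated by $(w_1,w_2)$, of order $q-1$. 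Counting: the image of this map has size $(q-1)/\gcd(w_1,w_2,q-1)=q-1$, so the kernel has order $q-1$, matching the orbit size, which gives both that each fiber is a single orbit (one point of $\TT$) and that the $q-1$ values of $r$ exhaust $\TT(w_1,w_2)$ (which has $q-1$ points by Proposition~\ref{prop: length of the weighted torus}).

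The main obstacle I anticipate is the subtlety that $w_1,w_2$ need not be invertible modulo $q-1$, so one cannot simply "divide'' and naive normalization of coordinates fails; the kernel-of-a-homomorphism argument above is designed to sidestep this by using only $\gcd(w_1,w_2)=1$ together with a cardinality count. Once the kernel of $(\delta,\eta)\mapsto\delta w_2-\eta w_1$ is identified with $\Span{(w_1,w_2)}$ in $(\ZZ/(q-1))^2$ — for which I would exhibit the containment $\Span{(w_1,w_2)}\subset\Ker$ directly and then match orders — the statement follows formally, and the final "moreover'' is just the observation that $q-1$ disjoint nonempty fibers covering a $(q-1)$-point set must each be a singleton and must cover everything.
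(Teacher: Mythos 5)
Your argument is correct, but it takes a genuinely different route from the paper's. The paper's proof is constructive: fixing Bezout coefficients $a,b$ with $aw_1+bw_2=1$, it exhibits the unique zero explicitly as $(\alpha^{rb},\alpha^{-ra})$ and shows that any $(x_1,x_2)$ with $x_1^{w_2}=\alpha^r x_2^{w_1}$ coincides with it projectively by acting with $\lambda=x_1^{-a}x_2^{-b}$; the ``moreover'' part is then the one-line observation that $x_1^{w_2}/x_2^{w_1}=\alpha^r$ for some $r$. You instead pass to exponents and identify the fibers of the homomorphism $(\delta,\eta)\mapsto \delta w_2-\eta w_1$ on $(\ZZ/(q-1))^2$ with the orbits of $\Span{(w_1,w_2)}$ by a cardinality count: surjectivity (which uses $\gcd(w_1,w_2)=1$) gives a kernel of order $q-1$, and the containment $\Span{(w_1,w_2)}\subset\Ker$ together with $\bigl|\Span{(w_1,w_2)}\bigr|=q-1$ forces equality. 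That last order computation (if $c(w_1,w_2)\equiv(0,0)$ then $q-1$ divides $\gcd(cw_1,cw_2)=c$) is exactly the freeness argument already carried out in Proposition~\ref{prop: length of the weighted torus}, so you may simply cite it, but you should say so explicitly rather than just assert the order. What each approach buys: yours is non-constructive but delivers both assertions of the lemma simultaneously from the fiber--orbit correspondence and isolates cleanly where $\gcd(w_1,w_2)=1$ enters, sidestepping the non-invertibility of $w_1,w_2$ modulo $q-1$; the paper's produces explicit homogeneous coordinates for the unique point. A small bonus of your write-up is the verification that $V(t_1^{w_2}-\alpha^r t_2^{w_1})$ meets no point with a zero coordinate, i.e.\ lies in $\TT(w_1,w_2)$, which the paper leaves implicit.
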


\begin{proof} Fix $a,b\in \ZZ$ such that $aw_1+bw_2=1$.
Clearly $(\alpha^{rb},\alpha^{-ra})\in V(t_1^{w_2}-\alpha^r t_2^{w_1})$. Suppose $(x_1,x_2)\in \TT(w_1,w_2)$ belongs to $V(t_1^{w_2}-\alpha^r t_2^{w_1})$, \emph{i.e.},
$x_1^{w_2}=\alpha^{r} x_2^{w_1}$. Then:
\[
(x_1,x_2)=(x_1(x_1^{-a}x_2^{-b})^{w_1}, x_2(x_1^{-a}x_2^{-b})^{w_2})= (x_1^{bw_2}x_2^{-bw_1},x_1^{-aw_2}x_2^{aw_1})=(\alpha^{rb},\alpha^{-ra}). 
\]
Hence $V(t_1^{w_2}-\alpha^r t_2^{w_1})=\set{(\alpha^{rb},\alpha^{-ra})}$. To show that every point in $(x_1,x_2)\in \TT(w_1,w_2)$ is the zero of one such 
polynomial it suffices to notice that $x_1^{w_2}/x_2^{w_1}=\alpha^r$, for some $0\leq r \leq q-2$.
\end{proof}

\begin{proposition}\label{prop: max zeros of a degree d poly}
Let $f\in K[t_1,t_2]$ be nonzero, homogeneous of degree $d$. Write $d = kw_1w_2 + l$, where $k\geq 0$ and $0\leq l < w_1w_2$. 
Then $\ls{V(f)\cap \TT(w_1,w_2)}\leq k -1+ \chi(l)$.
\end{proposition}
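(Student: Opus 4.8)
The plan is to bound the number of zeros of a homogeneous degree-$d$ polynomial $f\in K[t_1,t_2]$ on the torus $\TT(w_1,w_2)$ by relating such $f$ to products of the ``linear'' forms $t_1^{w_2}-\alpha^rt_2^{w_1}$ identified in Lemma~\ref{lemma: vanishing ideal of a point}. First I would observe that a homogeneous $f$ of degree $d$ is a $K$-linear combination of monomials $t_1^{a}t_2^{b}$ with $aw_1+bw_2=d$; by Proposition~\ref{prop: dimension of the code over the torus} (or rather by the factorization count in its proof) the number of such monomials is at most $k+\chi(l)$. Factoring out the largest common power of $t_1$ and of $t_2$, I may write $f = t_1^{e_1}t_2^{e_2}\tilde f$ where $\tilde f$ is not divisible by $t_1$ or $t_2$; since $t_1,t_2$ do not vanish on $\TT(w_1,w_2)$, the zeros of $f$ on the torus coincide with those of $\tilde f$, and $\tilde f$ is homogeneous of some degree $d' = k'w_1w_2 + l' \le d$ with at most $k'+\chi(l')$ terms, so it suffices to prove the bound for $\tilde f$; in particular I may assume $f$ itself has a nonzero $t_2^{d/w_2}$-term if $w_2\mid d$, etc.

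The key step is a change of variables to reduce to a one-variable polynomial. Since $\gcd(w_1,w_2)=1$, on the torus the quantity $u = t_1^{w_2}/t_2^{w_1}$ is a well-defined coordinate: by Lemma~\ref{lemma: vanishing ideal of a point}, the points of $\TT(w_1,w_2)$ are in bijection with the values $u\in K^*$, a point vanishing on $t_1^{w_2}-\alpha^rt_2^{w_1}$ precisely when $u=\alpha^r$. Now dehomogenize: writing $d = aw_1+bw_2$ for the term of $f$ with smallest $t_2$-exponent $b$ (equivalently largest $t_1$-exponent), every monomial of $f$ has the form $t_1^{a+iw_2}t_2^{b-iw_1}$ for $i=0,\dots,N$ where $N+1$ is the number of monomials appearing, so
\[
f = t_1^{a}t_2^{b-Nw_1}\sum_{i=0}^{N}\gamma_i\, t_1^{iw_2}t_2^{(N-i)w_1} = t_1^{a}t_2^{b-Nw_1}\,t_2^{Nw_1}\,g\!\left(\tfrac{t_1^{w_2}}{t_2^{w_1}}\right),
\]
where $g(u)=\sum_{i=0}^{N}\gamma_i u^{i}$ is a one-variable polynomial of degree at most $N$ with $\gamma_0,\gamma_N$ not both zero after the reduction above. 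On the torus the first two factors are units, so $V(f)\cap\TT(w_1,w_2)$ corresponds exactly to the roots of $g$ lying in $K^*$, of which there are at most $\deg g\le N$. It remains to show $N\le k-1+\chi(l)$ when $f$ genuinely has a zero on the torus — that is, when $g$ is nonconstant, so $N\ge 1$. The count in Proposition~\ref{prop: dimension of the code over the torus} gives $N+1\le k'+\chi(l')\le k+\chi(l)$ for the degree $d'$ of $\tilde f$; the only thing to rule out is equality $N = k-1+\chi(l)+1 = k+\chi(l)$ with $g$ nonconstant, and more precisely that the number of roots of $g$ in $K^*$ does not reach $k+\chi(l)-1$ unless $f$ is forced to be divisible by something that would contradict the reduction.

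The main obstacle, and where I expect the real work to be, is pinning down the ``$-1$'': the naive bound from the term count gives $\le k+\chi(l)$ zeros, but the claim is $\le k-1+\chi(l)$, so one zero must be shaved off. I expect this to come from the interaction between the dehomogenization exponents and the range constraint $d\le w_1w_2(q-1)-w_1-w_2$: when $f$ has the maximal number $k+\chi(l)$ of monomials, the extreme monomials are $t_1^{a}t_2^{b}$ and $t_1^{a+(k-1+\chi(l))w_2}t_2^{\,\cdots}$ with $a,b$ determined (up to the slack measured by $\chi(l)$) as in the proof of Proposition~\ref{prop: dimension of the code over the torus}; I would argue that either the ``constant term'' $\gamma_0$ or the ``leading term'' $\gamma_N$ of $g$ is forced to be nonzero (otherwise $f$ would be divisible by $t_1^{w_2}$ or $t_2^{w_1}$, increasing the degree beyond what the slack allows — here the hypothesis $d\le w_1w_2(q-1)-w_1-w_2$, equivalently $k\le q-2$ when $l$ is small, is what prevents wrapping around), and then use that the roots of $g$ in $K^*$ number at most $\deg g$ while simultaneously $u=0$ is not available; a careful case split on $\chi(l)=0$ versus $\chi(l)=1$ should then yield exactly $k-1+\chi(l)$. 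Once Proposition~\ref{prop: max zeros of a degree d poly} is in hand, combining it with Proposition~\ref{prop: dimension of the code over the torus} and the length $(q-1)^{s-1}=q-1$ from Proposition~\ref{prop: length of the weighted torus} gives $\delta_\TT(d) = (q-1)-(k-1+\chi(l)) = (q-1) - \dim_K C_\TT(d) + 1$, i.e.\ the singleton bound is attained, which is the promised MDS statement.
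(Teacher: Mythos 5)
Your strategy --- strip off the largest monomial factor $t_1^{e_1}t_2^{e_2}$, then write what remains as a unit times a one--variable polynomial $g$ in $u=t_1^{w_2}/t_2^{w_1}$ and count roots of $g$ in $K^*$ --- is sound, and it is essentially a non-inductive packaging of the computation the paper performs inside an induction on $k$ (the paper peels off one factor $t_1^{w_2}-\alpha^r t_2^{w_1}$ per zero found, using the very same substitution $G(t_1^{w_2}/t_2^{w_1})$). But the step you yourself flag as ``the real work'', namely getting the $-1$, is exactly where your sketch breaks down. First, a relevant slip: if $N+1$ is literally the number of monomials \emph{appearing} in $\tilde f$, then $\deg g\le N$ can fail (e.g.\ $\tilde f=t_2^{k'w_1}+t_1^{k'w_2}$ has two monomials while $g(u)=1+u^{k'}$ has degree $k'$); what is true is that $N$, taken as the largest index $i$ occurring, satisfies $N+1\le{}$ (number of \emph{possible} factorizations of $d'$ in $Q$), and these two readings of $N$ are conflated in your display. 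Second, and more seriously, you propose to extract the $-1$ from the hypothesis $d\le w_1w_2(q-1)-w_1-w_2$ together with a case analysis on the leading and constant terms of $g$; but that hypothesis is not part of Proposition~\ref{prop: max zeros of a degree d poly} at all (it only enters later, in Theorem~\ref{thm: computing the minimum distance}), so no correct proof of the proposition can lean on it, and the ``ruling out of equality'' you describe is never actually carried out.

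The gap closes if you push your own reduction one step further. Since $t_1\nmid\tilde f$ and $t_2\nmid\tilde f$, and since any two $t_1$-exponents of monomials of the same weighted degree are congruent modulo $w_2$ (and the $t_2$-exponents modulo $w_1$, because $\gcd(w_1,w_2)=1$), every $t_1$-exponent of $\tilde f$ is a multiple of $w_2$ and every $t_2$-exponent a multiple of $w_1$. Hence $d'=\deg\tilde f$ is a multiple of $w_1w_2$, say $d'=k'w_1w_2$, the possible monomials are exactly $t_1^{iw_2}t_2^{(k'-i)w_1}$ for $i=0,\dots,k'$, and $g$ has degree exactly $k'$ with nonzero constant term. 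Therefore $\ls{V(f)\cap\TT(w_1,w_2)}\le k'$. Finally $k'\le k-1+\chi(l)$: writing $kw_1w_2+l=k'w_1w_2+(e_1w_1+e_2w_2)$, either $k'\le k-1$, or $k'=k$ and then $l=e_1w_1+e_2w_2\in Q$, so $\chi(l)=1$. (The degenerate case where $\tilde f$ is a nonzero constant is harmless: $f\ne0$ forces $d\in Q$, so if $k=0$ then $\chi(l)=1$ and the bound $k-1+\chi(l)\ge0$ still holds.) With this repair your argument is complete, needs no induction, and is arguably tidier than the paper's; without it, the decisive inequality is asserted rather than proved.
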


\begin{proof}
We argue by induction on $k$. 
Suppose that $k = 0$. Then $d<w_1w_2$ and, by an argument similar to the one used in the proof of Proposition~\ref{prop: dimension of the code over the torus},
we deduce that there is only one factori\-zation of $d$ in $Q$, hence $f$ is a monomial and thus $\ls{V(f)\cap \TT(w_1,w_2)} = 0$. 
Additionally, if $k=0$ then $l=d\in Q$ and so $\chi(l)=1$ and the inequality of the statement holds.
\smallskip

Suppose $k \geq 1$. Let us write
\mbox{$f=g t_1^at_2^b$}, for some $g\in K[t_1,t_2]$, such that neither $t_1$ nor $t_2$ divides $g$.
Let $d'=d-aw_1-bw_2$, be the degree of $g$. If $d'<w_1w_2$ then $g=1$. In this situation $V(f)\cap \TT(w_1,w_2)$ is empty and there is nothing to show.
Suppose $d'\geq w_1w_2$. Let us write $d'=k'w_1w_2+l'$ for $1\leq k'\leq k$ and $0\leq l'<w_1w_2$. If $k'<k$, by induction we get:
\begin{equation}\label{eq: inequality for g}
\renewcommand{\arraystretch}{1.3}
\begin{array}{l}
\ls{V(f)\cap \TT(w_1,w_2)}  =  \ls{V(g)\cap \TT(w_1,w_2)}  \leq  
k' -1+ \chi(l')\leq k -1 + \chi(l).
\end{array} 
\end{equation}
If $k=k'$ then $l = l' +aw_1+bw_2$ and thus $\chi(l')\leq \chi (l)$.
We may assume there exists $(x_1,x_2)\in V(f)\cap \TT(w_1,w_2)$.
Let us write $g=\sum_{i=0}^r \alpha_i t_1^{a_i}t_2^{b_i}$, with $r\geq 1$ and, without loss in generality, $0=a_0\leq a_1\leq \cdots \leq a_r$. 
Since $w_1a_i +w_2b_i = d' = w_2b_0$, we deduce
that there exist $m_i\geq 0$ such that $a_i = m_iw_2$ and 
$b_i = b_0 - m_iw_1$. Hence, we may write in $\operatorname{Frac}K[t_1,t_2]$:
\[
g = t_2^{b_0} \sum_{i=0}^r \alpha_i\left (\frac{t_1^{w_2}}{t_2^{w_1}}\right )^{m_i} = t_2^{b_0} G\bigl(t_1^{w_2}/t_2^{w_1} \bigr),
\]
where $G(z)=\sum_{i=0}^r \alpha_i z^{m_i}\in K[z]$ has degree $m_r=a_r/w_2$.
We see that $x_1^{w_2}/x_2^{w_1}$ is a zero of $G$. Let $0\leq r \leq q-2$ be such that $\alpha^r = x_1^{w_2}/x_1^{w_1}$. Then \mbox{$G(z)=H(z)(z-\alpha^r)$}, for some $H\in K[z]$, of degree
$a_r/w_2-1$. Accordingly, $g =  t_2^{b_0} H\bigl(t_1^{w_2}/t_2^{w_1} \bigr)\bigl(t_1^{w_2}/t_2^{w_1} - \alpha^r \bigr)$.
Since $$b_0-w_1 \geq w_1(a_r/w_2-1) \iff b_0w_2 \geq w_1a_r \iff d'\geq w_1a_r,$$ clearing denominators, we conclude that there exists $h\in K[t_1,t_2]$, homogeneous, such that 
$g = (t_1^{w_2}- \alpha^r t_2^{w_1})h$. Since the degree of $h$ is $(k'-1)w_1w_2 + l'$, by induction and Lemma~\ref{lemma: vanishing ideal of a point} 
\[
\ls{V(f)\cap \TT(w_1,w_2)}  =  \ls{V(h)\cap \TT(w_1,w_2)} + 1 \leq  
(k'-1)-1+ \chi(l') + 1\leq k -1 + \chi(l). \qed
\]
\noqed
\end{proof}
\vspace{-.75cm}

We now address the computation of the minimum distance of the weighted projective Reed--Muller codes over
a weighted torus. Recall that the minimum distance is defined for a nonzero code. For this, the assumption that $d\in Q$, equivalent to 
$\dim_K C_\TT(d) \not = 0$, is necessary in the statement of the theorem.

\begin{theorem}\label{thm: computing the minimum distance}
If $0\leq d \leq   w_1w_2(q-1)-w_1-w_2$ and $d\in Q$. Write $d = kw_1w_2 + l$ with $k\geq 0$ and $0\leq l<w_1w_2$. Then the minimum distance 
of the evaluation code $C_\TT(d)$ is $(q-1)-k+1-\chi(l)$.
\end{theorem}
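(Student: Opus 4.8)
The plan is to combine the dimension formula from Proposition~\ref{prop: dimension of the code over the torus} with the upper bound on the number of zeros from Proposition~\ref{prop: max zeros of a degree d poly} and a matching construction, and then to observe that the resulting parameters meet the Singleton bound.

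First I would recall that, by the remark following Definition~\ref{def: basic parameters}, the minimum distance $\delta_\TT(d)$ equals $m - N$, where $m=(q-1)^{s-1}=(q-1)$ is the length (here $s=2$, \emph{cf.}~Proposition~\ref{prop: length of the weighted torus}) and $N$ is the maximum number of zeros in $\TT(w_1,w_2)$ attained by a homogeneous $f\in K[t_1,t_2]_d$ that does not vanish identically on the torus. Since $d\in Q$ we have $\dim_K C_\TT(d)\neq 0$, so such $f$ exist. By Proposition~\ref{prop: max zeros of a degree d poly}, any nonzero homogeneous $f$ of degree $d=kw_1w_2+l$ satisfies $|V(f)\cap \TT(w_1,w_2)|\leq k-1+\chi(l)$; in particular this bounds $N\leq k-1+\chi(l)$, giving $\delta_\TT(d)\geq (q-1)-k+1-\chi(l)$.

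For the reverse inequality I need to exhibit a homogeneous polynomial of degree $d$, not in $I_\TT$, with exactly $k-1+\chi(l)$ zeros on the torus. The natural candidate is a product of $k-1+\chi(l)$ distinct linear-type factors of the form $t_1^{w_2}-\alpha^{r_j}t_2^{w_1}$, each contributing one distinct point of $\TT(w_1,w_2)$ by Lemma~\ref{lemma: vanishing ideal of a point}, times a monomial $t_1^at_2^b$ chosen to make the total degree equal to $d$. The degree of the product of the $k-1+\chi(l)$ factors is $(k-1+\chi(l))w_1w_2$, so I need $aw_1+bw_2 = d-(k-1+\chi(l))w_1w_2 = (1-\chi(l))w_1w_2 + l$; since $\mathsf{g}_Q=w_1w_2-w_1-w_2$ one checks (as in the proof of Proposition~\ref{prop: dimension of the code over the torus}) that $l+(1-\chi(l))w_1w_2\in Q$, so suitable $a,b\in\NN$ exist, and one must also check $k-1+\chi(l)\leq q-2$ so that enough distinct values $\alpha^{r_j}$ are available — this follows from $d\leq w_1w_2(q-1)-w_1-w_2$. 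Since the factors are coprime and none of the points of the torus is a zero of the monomial part, $f\notin I_\TT$ and $|V(f)\cap\TT|=k-1+\chi(l)$ exactly. Hence $N = k-1+\chi(l)$ and $\delta_\TT(d)=(q-1)-k+1-\chi(l)$.

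The main obstacle is the boundary bookkeeping in the construction: verifying that the exponent arithmetic works out (that $d-(k-1+\chi(l))w_1w_2$ is a genuine element of $Q$, and that when $\chi(l)=1$ versus $\chi(l)=0$ the count of factors and the leftover monomial are consistent with $\dim_K C_\TT(d)=k+\chi(l)$), and confirming that $k-1+\chi(l)<q-1$ in the allowed range so that $k-1+\chi(l)$ pairwise distinct scalars $\alpha^{r_j}\in K^*$ can be chosen. Once these are in hand, it is worth closing with the remark that $\delta_\TT(d) = (q-1) - (k+\chi(l)) + 1 = \operatorname{length}(C_\TT(d)) - \dim_K C_\TT(d) + 1$, so equality holds in the Singleton bound and the code is maximum distance separable, which is exactly Corollary~\ref{cor: MDS codes}.
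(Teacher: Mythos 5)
Your proposal is correct and follows essentially the same route as the paper: the lower bound via Proposition~\ref{prop: max zeros of a degree d poly} and the length from Proposition~\ref{prop: length of the weighted torus}, and the matching construction $t_1^at_2^b\prod(t_1^{w_2}-\alpha^{r_j}t_2^{w_1})$ with $k-1+\chi(l)$ distinct factors, which the paper simply writes out as two separate cases according to whether $\chi(l)$ is $0$ or $1$. Your uniform bookkeeping with the leftover degree $(1-\chi(l))w_1w_2+l\in Q$ and the check $k\leq q-2$ is exactly what the paper verifies case by case.
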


\begin{proof}
Let $f\in K[t_1,t_2]$ be a homogeneous polynomial of degree $d$. 
Then, by Proposition~\ref{prop: max zeros of a degree d poly},
$f$ has at most $k -1 + \chi(l)$ 
zeros on $T(w_1,w_2)$. Since, by Proposition~\ref{prop: length of the weighted torus}, the length of
$C_\TT(d)$ is $q-1$ we get $\delta_\TT(d)\geq (q-1)-k +1 - \chi(l)$.
To prove the reverse inequality, we split the proof into $2$ cases. If 
$\chi(l)=1$, let $a,b\in \NN$ be such that 
$l= aw_1+bw_2$. Then, the polynomial 
$$\textstyle f = t_1^at_2^b\prod_{i=1}^{k} (t_1^{w_2} - \alpha^i t_2^{w_1})$$
has degree $d$ and, since $d\leq w_1w_2(q-1)-w_1-w_2$ implies that $0\leq k\leq q-2$, by Lemma~\ref{lemma: vanishing ideal of a point},
has exactly $k=k-1+\chi(l)$ zeros on $\TT(w_1,w_2)$. 
If $\chi(l)= 0$ then, since $d=kw_1w_2 + l \in Q$ we must have $k>0$. Additionally, since \mbox{$d-w_1w_2(k-1)\geq w_1w_2> \mathsf{g}_Q$}, there exist
$a,b\in \NN$ such that $aw_1+bw_2 = d-w_1w_2(k-1)$. Then the polynomial \mbox{$f = t_1^at_2^b\prod_{i=1}^{k-1} (t_1^{w_2} - \alpha^i t_2^{w_1})$}
has degree $d$ and has exactly $k-1=k-1+\chi(l)$ zeros
on $\TT(w_1,w_2)$.
\end{proof}

\begin{corollary}\label{cor: MDS codes}
The weighted projective Reed--Muller codes $C_\TT(d)$ over a (weighted) torus are 
maxi\-mum distance separable codes, i.e., for $d\in Q$, $\delta_\TT(d)=(q-1)-\dim C_\TT(d) +1$.
\end{corollary}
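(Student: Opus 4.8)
The plan is to combine the dimension formula from Proposition~\ref{prop: dimension of the code over the torus} with the minimum distance formula from Theorem~\ref{thm: computing the minimum distance} and simply check that the singleton bound is met with equality. First I would recall that, by Definition~\ref{def: basic parameters}, a linear code $C$ is maximum distance separable precisely when $\delta_C = \operatorname{length}(C) - \dim_K C + 1$. Here $C = C_\TT(d)$ for $d\in Q$, so by Proposition~\ref{prop: length of the weighted torus} the length is $(q-1)^{s-1}=q-1$ (we are in the case $s=2$). Writing $d = kw_1w_2 + l$ with $k\geq 0$ and $0\leq l < w_1w_2$, Proposition~\ref{prop: dimension of the code over the torus} gives $\dim_K C_\TT(d) = k + \chi(l)$, while Theorem~\ref{thm: computing the minimum distance} gives $\delta_\TT(d) = (q-1) - k + 1 - \chi(l)$.

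The verification is then immediate:
\[
\operatorname{length}(C_\TT(d)) - \dim_K C_\TT(d) + 1 = (q-1) - (k+\chi(l)) + 1 = (q-1) - k + 1 - \chi(l) = \delta_\TT(d),
\]
so the singleton bound is an equality. One should also note that both Proposition~\ref{prop: dimension of the code over the torus} and Theorem~\ref{thm: computing the minimum distance} apply exactly in the stated range $0\leq d\leq w_1w_2(q-1)-w_1-w_2$, which is the range below the index of regularity established at the start of Section~\ref{sec: codes in 1-dim}; for $d\in Q$ within this range the code is nonzero, so $\delta_\TT(d)$ is well defined, and for $d$ beyond the index of regularity the code is the full (trivial) code $K^{q-1}$, which is trivially MDS.

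There is essentially no obstacle here: all the substantive work has been carried out in the preceding propositions, and the corollary is a one-line arithmetic consequence. The only point requiring a word of care is the hypothesis $d\in Q$ — this is what guarantees $\chi(l)=1$ or $k>0$ as needed, and hence that the code is nonzero so that the notion of minimum distance is meaningful; for $d\notin Q$ in the given range one has $C_\TT(d)=0$ and there is nothing to prove. So the proof consists of invoking the two earlier results, substituting, and observing the algebraic identity above.
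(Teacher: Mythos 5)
Your proof is correct and is exactly the argument the paper intends (the corollary is stated without proof precisely because it is the immediate arithmetic combination of Proposition~\ref{prop: length of the weighted torus}, Proposition~\ref{prop: dimension of the code over the torus} and Theorem~\ref{thm: computing the minimum distance}). Your added remarks about the range of $d$ and the hypothesis $d\in Q$ are accurate and harmless.
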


\bibliographystyle{plain}

\end{document}